\newtheorem{lemma}{Lemma}
\newcommand{\ef}{\mathbb{F}}
\newcommand{\er}{\mathbb{R}}
\newcommand{\zet}{\mathbb{Z}}
\newcommand{\ce}{\mathbb{C}}
\newtheorem{theorem}{Theorem}
\newtheorem{corollary}{Corollary}
\newtheorem{conjecture}{Conjecture}
\numberwithin{equation}{section}
\begin{document}
\title[On the Erd\H{o}s-Falconer distance problem]
{On the Erd\H{o}s-Falconer distance problem
for two sets of different size in vector spaces over finite fields}
\subjclass[2000]{11T24, 52C10}
\author{Rainer Dietmann}
\address{Department of Mathematics, Royal Holloway, University of London\\
TW20 0EX Egham, United Kingdom}
\email{Rainer.Dietmann@rhul.ac.uk}
\maketitle
\begin{abstract}
We consider a finite fields version of the Erd\H{o}s-Falconer distance
problem for two different sets. In a certain range for the sizes of the
two sets we obtain results of the conjectured order of magnitude.
\end{abstract}

\section{Introduction}
Let $E \subset \er^s$, and let
\[
  \Delta(E) = \{\|\mathbf{x}-\mathbf{y}\| : \mathbf{x}, \mathbf{y} \in E\}
\]
be the set of distances between elements in $E$, where $\|\cdot\|$ denotes
the Euclidean metric. Erd\H{o}s' distance conjecture \cite{E} is that
\begin{equation}
\label{august}
  \#\Delta(E) \gg_\epsilon \; (\#E)^{s/2-\epsilon}
\end{equation}
for $s \ge 2$ and finite $E$. In a recent breakthrough paper by Guth and
Katz \cite{GK}, this problem has been solved for $s=2$, whereas it is still
open for higher dimensions.
Later Falconer \cite {F} considered a
continuous version of Erd\H{o}s' distance problem, replacing $\#E$ by the
Hausdorff dimension of $E$, and $\#\Delta(E)$ by the Lebesgue measure of
$\Delta(E)$. More recently, Iosevich and Rudnev \cite{IR} 
dealt with a finite fields version of these problems.
For a finite field $\ef_q$ and $\mathbf{x} \in \ef_q^s$, let
\[
  |\mathbf{x}|^2 = \sum_{i=1}^s x_i^2.
\]
Note that this is a natural way of defining distance over finite fields,
as for Euclidean distance keeping the property of being invariant under
orthogonal transformations, whereas on the other hand $|\mathbf{x}|^2=0$
no longer implies that $\mathbf{x}=\mathbf{0}$, since for $s \ge 3$ all
quadratic forms over finite fields are isotropic.\\
In the following we will always assume that $q$ is odd; in particular,
$q \ge 3$.
As pointed out in the introduction of \cite{IR},
the conjecture \eqref{august} no longer holds true over finite fields
irrespective of the size of $E$. One example (see introduction of
\cite{CEHIK}) for this phenomenon are sets
$E$ small enough to fall prey to certain number theoretic properties
of $\ef_q$:
Let $q$ be a prime such that $q \equiv 1 \pmod 4$, and let $i \in \ef_q$ 
be a square root of $-1$. For the set
\[
  E=\{(x,ix) : x \in \ef_q\}
\]
in $\ef_q^2$
one then immediately verifies that $\# E=q$, but $\# \Delta(E) = 1$.
For sets of large enough size, however, one should expect $\Delta(E)$ to have
order of magnitude $q$ many elements,
or even be the set of all elements in $\ef_q$.
In this context, one of Iosevich and Rudnev's main results
(see \cite{IR}, Theorem 1.2) is that if
$E \subset \ef_q^s$ where $\#E \ge Cq^{s/2}$ for a sufficiently large
constant $C$, then
\begin{equation}
\label{goldener_reiter}
  \#\Delta(E) \gg \min\left\{q, \frac{\#E}{q^{(s-1)/2}} \right\},
\end{equation}
where
\[
  \Delta(E) = \left\{|\mathbf{x}-\mathbf{y}|^2 : \mathbf{x}, \mathbf{y}
  \in E\right\}.
\]
In particular, if $\#E \gg q^{(s+1)/2}$, then $\#\Delta(E)\gg q$.
For $s=2$, the stronger result that $\#\Delta(E) \gg q$ if
\begin{equation}
\label{depris}
  \#E \gg q^{4/3}
\end{equation}
has recently been established by Chapman, Erdogan, Hart, Iosevich and Koh
(see \cite{CEHIK}, Theorem 2.2).
Our focus in this paper is on a generalisation of this
problem to the situation of distances between two different sets
$E, F \in \ef_q^s$. Analogously to above, we define
\[
  \Delta(E, F)=\#\{|\mathbf{x}-\mathbf{y}|^2 : \mathbf{x} \in E,
  \, \mathbf{y} \in F\}.
\]
It is straightforward to adapt Iosevich and Rudnev's approach to show that
if $(\#E)(\#F) \ge Cq^s$ for a sufficiently large constant $C$, then
\begin{equation}
\label{schnee}
  \#\Delta(E,F) \gg \min\left\{q, \frac{(\#E)^{1/2}(\#F)^{1/2}}
  {q^{(s-1)/2}}\right\};
\end{equation}
see also Theorem 2.1 in \cite{S} for a similar result.
In particular, if $(\#E)(\#F) \gg q^{s+1}$, then $\#\Delta(E,F) \gg q$.
For $s=2$, the stronger result that $\#\Delta(E,F) \gg q$ if
\begin{equation}
\label{delta}
  (\#E)(\#F) \gg q^{8/3}
\end{equation}
has recently been proved by Koh and Shen
(\cite {KS}, Theorem 1.3), this way generalising \eqref{depris},
and they also put forward the following
conjecture (see Conjecture 1.2 in \cite{KS2})
generalising Conjecture 1.1 in \cite{IR} for even $s$.
\begin{conjecture}
\label{dienstag}
Let $s \ge 2$ be even and $(\#E)(\#F) \ge Cq^s$ for a sufficiently large
constant $C$. Then $\#\Delta(E,F) \gg q$.
\end{conjecture}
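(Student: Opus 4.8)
The plan is to proceed by the standard second-moment method. For $t \in \ef_q$ set
\[
  \nu(t) = \#\{(\mathbf{x},\mathbf{y}) \in E \times F : |\mathbf{x}-\mathbf{y}|^2 = t\},
\]
so that $\sum_t \nu(t) = (\#E)(\#F)$ and the number of $t$ with $\nu(t)>0$ is exactly $\#\Delta(E,F)$. Cauchy--Schwarz then gives
\[
  ((\#E)(\#F))^2 = \Big(\sum_{t:\,\nu(t)>0}\nu(t)\Big)^2
  \le \#\{t : \nu(t)>0\}\,\sum_{t}\nu(t)^2
  = \#\Delta(E,F)\,\sum_t \nu(t)^2 .
\]
Hence it suffices to prove the second-moment bound $\sum_t \nu(t)^2 \ll ((\#E)(\#F))^2/q$; combined with the hypothesis $(\#E)(\#F) \ge Cq^s$ this yields $\#\Delta(E,F) \gg q$.

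To estimate $\sum_t \nu(t)^2$ I would expand the identity $|\mathbf{x}-\mathbf{y}|^2 = |\mathbf{x}'-\mathbf{y}'|^2$ using a nontrivial additive character $\chi$ of $\ef_q$ via $\mathbf{1}[u=v] = q^{-1}\sum_{a\in\ef_q}\chi(a(u-v))$. The frequency $a=0$ contributes the main term $((\#E)(\#F))^2/q$. For $a \neq 0$, writing $\mathbf{w}=\mathbf{x}-\mathbf{y}$ and introducing the difference-counting function $r(\mathbf{w}) = \#\{(\mathbf{x},\mathbf{y})\in E\times F : \mathbf{x}-\mathbf{y}=\mathbf{w}\}$, the relevant quantity is $\sum_{\mathbf{w}}\chi(a|\mathbf{w}|^2)r(\mathbf{w})$. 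Expanding $r$ by Fourier inversion, whose coefficients satisfy $\hat r(\mathbf{m}) = \hat E(\mathbf{m})\overline{\hat F(\mathbf{m})}$, and completing the square in $\mathbf{w}$, the $\mathbf{w}$-sum becomes $\chi(-|\mathbf{m}|^2/(4a))\,\eta(a)^s g^s$, where $\eta$ is the quadratic character and $g=\sum_{x\in\ef_q}\chi(x^2)$ the standard Gauss sum. Here the evenness of $s$ is decisive: $\eta(a)^s=1$, so the quadratic twist disappears, and $g^s = \eta(-1)^{s/2}q^{s/2}$ is independent of $a$. This is exactly why the conjecture is posed for even $s$, and it is the feature I would exploit.

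Carrying this out and summing the resulting geometric sum over $a \in \ef_q$ collapses the constraint to $|\mathbf{m}|^2 = |\mathbf{m}'|^2$, so that, up to lower-order terms, the error in $\sum_t \nu(t)^2$ is a positive multiple of
\[
  \frac{1}{q^s}\sum_{\lambda \in \ef_q}\Big|\sum_{|\mathbf{m}|^2 = \lambda}\hat E(\mathbf{m})\overline{\hat F(\mathbf{m})}\Big|^2 .
\]
The whole problem thus reduces to showing that this \emph{sphere-restricted} Fourier sum is $\ll ((\#E)(\#F))^2/q$. Discarding the constraint $|\mathbf{m}|^2=\lambda$ and applying Cauchy--Schwarz together with Plancherel ($\sum_{\mathbf{m}}|\hat E(\mathbf{m})|^2 = q^s\#E$, and likewise for $F$) recovers exactly a bound of the strength of \eqref{schnee}, which needs $(\#E)(\#F)\gg q^{s+1}$; the content of the conjecture is the extra saving of a full factor of $q$.

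This last saving is where I expect the genuine obstacle to lie. Exploiting the restriction $|\mathbf{m}|^2=\lambda$ amounts to a restriction/extension estimate for the sphere in $\ef_q^s$: one must control how the mass of $\hat E$ and $\hat F$ distributes across the level sets $\{|\mathbf{m}|^2=\lambda\}$, equivalently the additive energies of $E$ and $F$ relative to these spheres. No bound following solely from Plancherel and the crude Gauss-sum input can close the gap, so one would have to import additional structure --- along the lines of the incidence- and energy-based arguments that Koh and Shen use to reach \eqref{delta} for $s=2$, or sharp finite-field restriction theory for the sphere --- and make it work uniformly in $\lambda$. Since even the one-set analogue (Conjecture 1.1 of \cite{IR}) remains open, I would expect the sphere-restricted sum to be the true bottleneck, and a complete proof to require a new restriction estimate rather than a refinement of the Fourier calculation above.
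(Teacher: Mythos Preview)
The statement you were given is a \emph{conjecture}, not a theorem: the paper does not prove it, and indeed explicitly attributes it to Koh and Shen as an open problem. The paper's own contribution (Theorem~\ref{chaos}) establishes the conclusion $\#\Delta(E,F)\gg q$ only under the additional hypothesis $\max\{\#E,\#F\}\gg q^{(s+1)/2}\log q$, and with a $\log q$ loss; so there is no ``paper's own proof'' to compare against.

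Your proposal is likewise not a proof, and you say so yourself. What you have written is a correct reduction together with an accurate diagnosis of the obstruction. Your second-moment computation is exactly Lemma~\ref{rollei} of the paper: expanding $\sum_j\nu(j)^2$ via characters, the $a=0$ term gives $(\#E)^2(\#F)^2/q$, the Gauss-sum calculation (where, as you note, even $s$ kills the quadratic twist) produces the term $q^{3s}\sum_{r}|\sigma_{E,F}(r)|^2$, and the leftover is the harmless $q^{s-1}(\#E)(\#F)$. You are right that the entire content of the conjecture is the bound
\[
\sum_{r\in\ef_q}|\sigma_{E,F}(r)|^2 \ll q^{-3s-1}(\#E)^2(\#F)^2,
\]
and that Plancherel alone only gives this with an extra factor of $q$. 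The paper attacks this sum by Cauchy--Schwarz, $|\sigma_{E,F}(r)|^2\le\sigma_E(r)\sigma_F(r)$, followed by a dyadic pigeonholing on $\sigma_F$ (Lemma~\ref{tavanic}); this is what buys the partial range in Theorem~\ref{chaos} but does not close the gap in general. So your identification of the sphere-restricted sum as the genuine bottleneck, and of a new restriction-type input as the missing ingredient, matches the state of affairs in the paper.
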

In this paper we establish the following result, which improves on
(\ref{schnee}) and (\ref{delta}) for sets $E, F$ of
different size in a certain range for $(\#E)$ and $(\#F)$.
\begin{theorem}
\label{chaos}
Let $E, F \subset \ef_q^s$ where $s \ge 2$. Further, let $\#E \le
\#F$ and $(\#E)(\#F) \ge (900+\log q)q^s$. Then
\begin{equation}
\label{sonntag}
  \#\Delta(E,F) \gg \min \left\{q, \frac{\#F}{q^{(s-1)/2}\log q} \right\}.
\end{equation}
For $s=2$ also the alternative lower bound
\begin{equation}
\label{montag}
  \#\Delta(E,F) \gg \min \left\{q, \frac{(\#E)^{1/2}{\#F}}{q\log q} \right\}
\end{equation}
holds true.
\end{theorem}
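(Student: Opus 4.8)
The plan is to break the symmetry between $E$ and $F$ by fixing a single well-chosen centre in the smaller set and measuring its distances to all of $F$. For $\mathbf{x}\in E$ and $t\in\ef_q$ write $\nu_{\mathbf{x}}(t)=\#\{\mathbf{y}\in F:|\mathbf{x}-\mathbf{y}|^2=t\}$, so that $\sum_t\nu_{\mathbf{x}}(t)=\#F$ and, by Cauchy--Schwarz, $\#\Delta(\{\mathbf{x}\},F)\ge(\#F)^2/\sum_t\nu_{\mathbf{x}}(t)^2$. Choosing $\mathbf{x}_0$ to minimise the denominator and bounding the minimum by the average over $E$, I would obtain
\[
  \#\Delta(E,F)\ge\#\Delta(\{\mathbf{x}_0\},F)\ge\frac{\#E\,(\#F)^2}{T},\qquad T:=\sum_{\mathbf{x}\in E}\sum_t\nu_{\mathbf{x}}(t)^2 .
\]
Here $T$ counts triples $(\mathbf{x};\mathbf{y},\mathbf{y}')\in E\times F\times F$ for which $\mathbf{x}$ is equidistant from $\mathbf{y}$ and $\mathbf{y}'$; geometrically these are incidences between the points of $E$ and the perpendicular-bisector hyperplanes of pairs from $F$. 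The whole problem is thus reduced to showing that $T$ is not much larger than its expected main term, and it is the numerator $\#E\,(\#F)^2$ of this reduction, together with an error term proportional to $\#E\,\#F$, that will eventually cancel one factor of $\#E$ and replace the symmetric $(\#E\,\#F)^{1/2}$ by $\#F$.

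Second, I would evaluate $T$ by additive-character orthogonality. Detecting $|\mathbf{x}-\mathbf{y}|^2=|\mathbf{x}-\mathbf{y}'|^2$ by a sum over $\ef_q$ and completing the square with Gauss sums $G_s=\sum_w e_q(sw^2)$ (for which $|G_s|=q^{1/2}$), the term in which the dual variable vanishes gives the main term $\#E\,(\#F)^2/q$, the diagonal $\mathbf{y}=\mathbf{y}'$ gives a harmless $(1-1/q)\#E\,\#F$, and the remainder is a single error term $q^{-1}\Psi$. Carrying out the inner frequency summation, one is left with
\[
  \Psi=\sum_{\mathbf{w}\neq\mathbf{0}}\overline{\widehat{E}(\mathbf{w})}\,R_F(\mathbf{w}),\qquad
  R_F(\mathbf{w})=\sum_{u\neq0}e_q(-u|\mathbf{w}|^2)\sum_{\substack{\mathbf{y}\in F\\ \mathbf{y}+2u\mathbf{w}\in F}}e_q(-\mathbf{y}\cdot\mathbf{w}),
\]
where $\widehat{E}$ is the Fourier transform of the indicator of $E$. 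The decisive structural feature is that $E$ enters only through the single factor $\widehat{E}(\mathbf{w})$: applying Cauchy--Schwarz in $\mathbf{w}$ and Plancherel, $\sum_{\mathbf{w}}|\widehat{E}(\mathbf{w})|^2=q^s\#E$, keeps $\#E$ to the first power, whereas $F$ appears quadratically inside $R_F$.

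The heart of the argument, and the step I expect to be the main obstacle, is the estimate $\Psi\ll\#E\,\#F\,q^{(s+1)/2}\log q$, which by the above Cauchy--Schwarz reduces to a bound of the shape $\sum_{\mathbf{w}}|R_F(\mathbf{w})|^2\ll(\#F)^2q(\log q)^2$ for the $F$-side alone. This is where the genuine work lies. The quantity $R_F(\mathbf{w})$ measures the additive energy of $F$ along the direction $\mathbf{w}$, and the naive bound $|R_F(\mathbf{w})|\le\sum_{\lambda\neq0}\#\{(\mathbf{y},\mathbf{y}')\in F^2:\mathbf{y}-\mathbf{y}'=\lambda\mathbf{w}\}$ is far too lossy, since it discards the oscillation supplied by the phases $e_q(-u|\mathbf{w}|^2)$ and $e_q(-\mathbf{y}\cdot\mathbf{w})$; indeed, without cancellation the contribution of a direction carrying much of $F$ alone defeats the bound. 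I would therefore expose the Gauss sum hidden in the joint $u$- and $\mathbf{w}$-summation and split the surviving range dyadically according to the size of the difference multiplicities of $F$. It is precisely this dyadic decomposition into $O(\log q)$ scales that produces the factor $\log q$ in \eqref{sonntag} and dictates the additive $\log q$ in the hypothesis $(\#E)(\#F)\ge(900+\log q)q^s$. Assembling the three contributions to $T$ and substituting into the reduction gives $\#\Delta(E,F)\gg \#F\big/\big(\tfrac{\#F}{q}+q^{(s-1)/2}\log q\big)$, which is exactly the claimed $\min\{q,\ \#F/(q^{(s-1)/2}\log q)\}$.

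Finally, for $s=2$ the alternative bound \eqref{montag} should follow from the same scheme, but with the error term controlled through planar incidence geometry rather than purely through the direction energies: two distinct circles meet in at most two points, so a fixed pair of points of $F$ is equidistant from only very few pairs of distinct centres $\mathbf{x},\mathbf{x}'\in E$. Exploiting this two-point phenomenon when estimating $\Psi$ (or the corresponding fourth-moment quantity in $F$) retains a factor $(\#E)^{1/2}$ that is lost in the general-$s$ Fourier bound, and yields $\#\Delta(E,F)\gg\min\{q,\ (\#E)^{1/2}\#F/(q\log q)\}$, which improves on \eqref{sonntag} precisely when $\#E>q$.
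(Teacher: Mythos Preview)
Your reduction via a well-chosen centre and the triple count $T=\#\{(\mathbf{x},\mathbf{y},\mathbf{y}')\in E\times F^2:|\mathbf{x}-\mathbf{y}|^2=|\mathbf{x}-\mathbf{y}'|^2\}$ is a legitimate opening, and it is genuinely different from the paper's route, which works with the full second moment $\sum_{j}\nu(j)^2$ and never singles out a point of $E$. But the step you yourself flag as ``the main obstacle'' is a real gap: the bound $\sum_{\mathbf{w}}|R_F(\mathbf{w})|^2\ll(\#F)^2q(\log q)^2$ is simply false. Take $s$ even and let $F$ be a totally isotropic subspace of dimension $s/2$; with $E=\{\mathbf{0}\}$ one has $T=(\#F)^2=q^s$, whereas your claimed estimate on $T$ (main term $q^{s-1}$, diagonal $\ll q^{s/2}$, error $\ll q^{(s-1)/2}(\#E)^{1/2}\#F\log q=q^{s-1/2}\log q$) falls short by a factor $q^{1/2}/\log q$. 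More tellingly, your Cauchy--Schwarz in $\mathbf{w}$ actually yields $|\Psi|\ll q^{(s+1)/2}(\#E)^{1/2}(\#F)\log q$, not $\#E$ to the first power as you wrote; feeding this back gives $\#\Delta(E,F)\gg\min\{q,(\#E)^{1/2}\#F/(q^{(s-1)/2}\log q)\}$, which for $E=F$ and any $s\ge3$ would force $\#\Delta(E)\gg q$ as soon as $\#E\gg q^{(s+1)/3+o(1)}$, well below the conjectured threshold $q^{s/2}$. So the loss has to occur exactly where you placed it, and the proposed dyadic decomposition of difference multiplicities cannot rescue an estimate that is false for isotropic $F$.

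What the paper does instead is never decouple $E$ from $F$ across \emph{all} frequencies. It expresses $\sum_j\nu(j)^2$ through the spherical averages $\sigma_E(r),\sigma_F(r)$ and bounds $\sum_{r\in\ef_q^*}\sigma_E(r)\sigma_F(r)$ asymmetrically: the pointwise estimate $\sigma_E(r)\ll q^{-s-1}\#E+q^{-(3s+1)/2}(\#E)^2$ of Lemma~\ref{karte} (valid for $r\ne0$, and also for $r=0$ when $s$ is odd) is applied to the smaller set, while for the larger set only the Plancherel identity $\sum_r\sigma_F(r)=q^{-s}\#F$ is used. A dyadic decomposition of the \emph{values} of $\sigma_F(r)$ over $r$, followed by Cauchy--Schwarz on each level set, marries the two at the cost of the $\log q$. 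The dangerous $r=0$ shell in even dimension---precisely the isotropic obstruction that breaks your $R_F$ bound---is not estimated at all: Lemma~\ref{montblanc} identifies $q^{3s}|\sigma_{E,F}(0)|^2$ with $\nu(0)^2$ up to $O(q^{-1}(\#E)^2(\#F)^2)$, and $\nu(0)^2$ is then subtracted from both sides before the lower bound on $\#\Delta(E,F)$ is extracted. Your bisector scheme has no analogue of this subtraction, which is why the isotropic example defeats it.
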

Note that (\ref{montag}) is superior to (\ref{sonntag}) for $s=2$ if and only
if $\#E \gg q$.
Note also that Theorem \ref{chaos}
implies that if $(\#E)(\#F) \ge (900+\log q)q^s$ and
$\max\{\#E, \#F\} \ge
q^{(s+1)/2} \log q$, then $\#\Delta(E,F) \gg q$.
These conditions on $E$ and $F$
are for example satisfied if $\#E \ge q^{(s-1)/2}$ and
$\#F \ge (900+\log q) q^{(s+1)/2}$.
Hence, apart from a factor $\log q$, Conjecture \ref{dienstag}
holds true for a certain range of cardinalities
of $E$ and $F$, both for even and odd dimension $s$.\\
Our approach follows that of Iosevich and Rudnev, paying close
attention to certain spherical averages of Fourier transforms.

\section{Notation}
Our notation is fairly standard. Let $\ce$ be the field of complex
numbers, and we write $\ef_q$ for a fixed finite field
having $q$ elements, where $q$ is odd, and we denote by
$\ef_q^*$ the non-zero elements of $\ef_q$. Further, if $a \in \ef_q^*$,
we write $\overline{a}$ for the multiplicative inverse of $a$. Moreover,
we write
\[ 
  e\left(\frac{j}{q} \right) \; (1 \le j \le q)
\]
for the additive
characters of $\ef_q$, the main character being that where $j=q$.
If $q$ is a prime, then $e(j/q)$ is just
\[
  e\left(\frac{j}{q}\right) = e^{2\pi i \frac{j}{q}}
\]
where $i^2=-1$. If $f : \ef_q^s \rightarrow \ce$ is any function, then
we denote by $\hat{f}$ its Fourier transform given by
\[
  \hat{f}(\mathbf{x}) = q^{-s} \sum_{\mathbf{m} \in \ef_q^s}
  e\left(\frac{-\mathbf{m}\mathbf{x}}{q}\right) f(\mathbf{m}),
\]
where as usual $\mathbf{m}\mathbf{x}$ is the inner product
\[
  \mathbf{m}\mathbf{x}=\sum_{i=1}^s m_ix_i.
\]
The function $f$ can be recovered from its Fourier transform $\hat{f}$
via the inversion formula
\[
  f(\mathbf{x}) = \sum_{\mathbf{m} \in \ef_q^s}
  e\left(\frac{\mathbf{m}\mathbf{x}}{q}\right)\hat{f}(\mathbf{m}).
\]
The tool that underpins many arguments is \textit{Plancherel's
formula}
\[
  \sum_{\mathbf{m} \in \ef_q^s} \left|\hat{f}(\mathbf{m})\right|^2
  = q^{-s} \sum_{\mathbf{x} \in \ef_q^s} \left|f(\mathbf{x})\right|^2.
\]
All these formulas are easy to verify, and proofs can be found in many
textbooks on number theory or Fourier analysis. For a subset
$E \subset \ef_q^s$, we also write $E$ for its characteristic function,
i.e.
\[
  E(\mathbf{x}) = 
  \begin{cases}
    1 & \text{if $\mathbf{x} \in E$,} \\
    0 & \text{otherwise,}
  \end{cases}
\]
and analogously for subsets $F \subset \ef_q^s$. Moreover, let $S_r$ be
the sphere
\[
  S_r = \{\mathbf{x} \in \ef_q^s:|\mathbf{x}|^2=r\},
\]
and as above we also write $S_r$ for the corresponding characteristic
function.
Moreover, for $E \subset \ef_q^s$ and $r \in \ef_q$, let $\sigma_E(r)$ be the
spherical average
\[
  \sigma_E(r) = \sum_{\mathbf{a} \in \ef_q^s:
  |\mathbf{a}|^2 = r} |\hat{E}(\mathbf{a})|^2
\]
of the Fourier transform $\hat{E}(\mathbf{a})$ of $E$, and we
define analogously $\sigma_F(r)$. Furthermore, we define
\[
  \sigma_{E, F}(r) = \sum_{\mathbf{m} \in \ef_q^s:|\mathbf{m}|^2=r}
  \overline{\hat{E}(\mathbf{m})} \hat{F}(\mathbf{m}),
\]
where as usual $\, \bar{} \, $ denotes complex conjugation.
In particular, $\sigma_E(r)=\sigma_{E,E}(r)$.
Our main tool for bounding $\#\Delta(E,F)$ below is the following
upper bound on $\sigma_E \sigma_F$ on average. In the following,
all implied $O$-constants depend at most on the dimension $s$.
\begin{lemma}
\label{tavanic}
In the notation from above, let $s \ge 2$. Then we have
\begin{equation}
\label{federolf}
  \sum_{r \in \ef_q^*} \sigma_E(r) \sigma_F(r) \ll \log q
  \left( q^{-2s-1} (\#E) (\#F) + q^{-\frac{5s+1}{2}} (\#E)^2 (\#F) \right).
\end{equation}
For odd $s \ge 2$, also the bound
\begin{equation}
\label{kiste}
  \sum_{r \in \ef_q} \sigma_E(r) \sigma_F(r) \ll \log q
  \left( q^{-2s-1} (\#E) (\#F) + q^{-\frac{5s+1}{2}} (\#E)^2 (\#F) \right).
\end{equation}
holds true, including the term $r=0$. Moreover,
for $s=2$ we also have the alternative bound
\begin{equation}
\label{lidl}
  \sum_{r \in \ef_q^*} \sigma_E(r) \sigma_F(r) \ll
  (\log q) q^{-5} (\#E)^{3/2} (\#F).
\end{equation}
\end{lemma}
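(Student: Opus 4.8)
The plan is to expand the product and detect the sphere condition by additive characters. Summing the identity $\sigma_E(r)\sigma_F(r)=\sum_{|\mathbf a|^2=|\mathbf b|^2=r}|\hat E(\mathbf a)|^2|\hat F(\mathbf b)|^2$ over $r\in\ef_q$ collapses to the single constraint $|\mathbf a|^2=|\mathbf b|^2$, which I would write as $q^{-1}\sum_{t\in\ef_q}(\cdots)$. By Plancherel the frequency $t=0$ yields the main term $q^{-2s-1}(\#E)(\#F)$. For $t\neq0$ I would insert $|\hat E(\mathbf a)|^2=q^{-2s}\sum_{\mathbf m,\mathbf n\in E}e(-(\mathbf m-\mathbf n)\mathbf a/q)$ and evaluate the complete quadratic exponential sum over $\mathbf a$ by completing the square coordinatewise; this produces a power of the quadratic Gauss sum $G=\sum_x e(x^2/q)$, for which $|G|^2=q$ and $G^2=\eta(-1)q$, with $\eta$ the quadratic character. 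Assembling the $s$ coordinates for both sets, the factors $\eta(t)^s$ and the powers of $G$ combine to $q^s$ irrespective of the parity of $s$, and I arrive at the exact identity
\[
  \sum_{r\in\ef_q}\sigma_E(r)\sigma_F(r)=q^{-2s-1}(\#E)(\#F)+q^{-3s-1}\sum_{c\in\ef_q^*}B_E(c)\overline{B_F(c)},
\]
where $B_E(c)=\sum_{\mathbf m,\mathbf n\in E}e(-c|\mathbf m-\mathbf n|^2/q)$ and likewise for $F$.

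By orthogonality, $\sum_{c\in\ef_q^*}B_E(c)\overline{B_F(c)}=q\sum_\ell\nu_E(\ell)\nu_F(\ell)-(\#E)^2(\#F)^2$, where $\nu_E(\ell)$ counts the pairs in $E$ at squared distance $\ell$, so the whole problem reduces to estimating the distance-distribution correlation $\sum_\ell\nu_E(\ell)\nu_F(\ell)$; the diagonal frequency reproduces $(\#E)^2(\#F)^2/q$, which cancels against the subtracted term. What remains to prove is
\[
  \sum_{\ell\neq0}\nu_E(\ell)\nu_F(\ell)\ll\frac{(\#E)^2(\#F)^2}{q}+(\log q)\,q^{(s-1)/2}(\#E)^2(\#F).
\]
Here I would use the hypothesis $\#E\le\#F$ asymmetrically, bounding the larger set through the pointwise Gauss-sum estimate $|B_F(c)|\le q^{s/2}\#F$ (valid for $c\neq0$) and reducing matters to a bound of the shape $\sum_{c\neq0}|B_E(c)|\ll(\log q)q^{1/2}(\#E)^2$. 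I expect the factor $\log q$ to enter through a dyadic decomposition of the level sets $\{\ell\neq0:\nu_E(\ell)\sim2^j\}$, of which there are $O(\log q)$, together with the sphere-size bound $\#S_\ell=q^{s-1}+O(q^{(s-1)/2})$ and a second-moment bound for $\nu_E$. This energy estimate is the crux: the naive pointwise Gauss bound is lossy by about $\sqrt q$, so genuine cancellation, reflecting the spread of the distance set of $E$, must be exploited.

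The difference between \eqref{federolf} and \eqref{kiste} I would trace to the zero sphere. Restricting the left-hand side to $\ef_q^*$ subtracts $\sigma_E(0)\sigma_F(0)$, which matches to leading order the $\ell=0$ contribution $q^{-3s}\nu_E(0)\nu_F(0)$ of the correlation above, so that the $\ef_q^*$-sum genuinely reduces to the off-diagonal estimate just stated, uniformly in $s$. The contribution of the cone is governed by $\sum_{t\neq0}\eta(t)^s$, which vanishes for odd $s$ but equals $q-1$ for even $s$; hence in odd dimensions $\nu_E(0)$ and $\sigma_E(0)$ are already negligible and $r=0$ may be harmlessly included, giving \eqref{kiste}, whereas for even $s$ the cone can be as large as $(\#E)^2$ (for instance when $E$ lies in a totally isotropic subspace) and only the sum over $\ef_q^*$ in \eqref{federolf} is admissible.

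Finally, for the sharper planar bound \eqref{lidl} I would replace the generic energy estimate by a circle-incidence bound. Writing $\sum_{\ell\neq0}\nu_E(\ell)\nu_F(\ell)=\sum_{\mathbf m\in E}\sum_{\mathbf p,\mathbf q\in F}\#\bigl(E\cap S_{|\mathbf p-\mathbf q|^2}(\mathbf m)\bigr)$ exhibits this sum as an incidence count between points of $E$ and circles centred at points of $E$ with radii prescribed by $F$. Feeding in the sharp planar incidence theory (the same input underlying the exponents $4/3$ and $8/3$ in \cite{CEHIK} and \cite{KS}) should yield $\sum_{\ell\neq0}\nu_E(\ell)\nu_F(\ell)\ll (\#E)^2(\#F)^2/q+(\log q)\,q\,(\#E)^{3/2}(\#F)$, which after the same bookkeeping gives \eqref{lidl}. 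This bound is superior precisely when $\#E>q$, in agreement with the remark following the theorem.
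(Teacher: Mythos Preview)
There is a genuine gap. Your identity
\[
\sum_{r\in\ef_q}\sigma_E(r)\sigma_F(r)=q^{-2s-1}(\#E)(\#F)+q^{-3s}\sum_{\ell\in\ef_q}\nu_E(\ell)\nu_F(\ell)-q^{-3s-1}(\#E)^2(\#F)^2
\]
is correct, but it only translates the problem into dual variables: bounding $\sum_{\ell}\nu_E(\ell)\nu_F(\ell)$ is, by your own identity, equivalent to bounding $\sum_r\sigma_E(r)\sigma_F(r)$, so no progress has been made. Your concrete plan for the ``crux''---the pointwise Gauss bound $|B_F(c)|\le q^{s/2}\#F$ together with an $L^1$ bound $\sum_{c\ne 0}|B_E(c)|\ll(\log q)\,q^{1/2}(\#E)^2$---fails, because the $L^1$ bound is simply false: already for $E=\{\mathbf 0\}$ one has $B_E\equiv 1$, hence $\sum_{c\ne 0}|B_E(c)|=q-1$, not $O(q^{1/2}\log q)$. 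A dyadic decomposition of the level sets of $\nu_E$ and the sphere-size estimate $\#S_\ell=q^{s-1}+O(q^{(s-1)/2})$ are both Gauss-sum-level inputs and cannot by themselves recover the missing $\sqrt q$; something arithmetically deeper is required.

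What is missing is precisely the Kloosterman/Sali\'e input: for $\mathbf m\ne\mathbf 0$ and $r\ne 0$ (or any $r$ when $s$ is odd) one has $\hat S_r(\mathbf m)\ll q^{-(s+1)/2}$ (Corollary~\ref{zoff}), which yields the pointwise bound $\sigma_E(r)\ll q^{-s-1}\#E+q^{-(3s+1)/2}(\#E)^2$ of Lemma~\ref{karte}. The paper's proof is then very short: after a dyadic pigeonhole on the values of $\sigma_F$ (the source of the $\log q$) and Cauchy--Schwarz over the resulting level set $M$, one bounds $\sum_{r\in M}\sigma_E(r)^2\le(\max_{r\in\ef_q^*}\sigma_E(r))^2\,\#M$ by Lemma~\ref{karte} and $\sum_{r\in M}\sigma_F(r)^2\ll(\#M)^{-1}q^{-2s}(\#F)^2$ by Plancherel. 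In your $\nu$-language the same mechanism reads: use the Iosevich--Rudnev pointwise bound $\nu_F(\ell)\ll(\#F)^2/q+q^{(s-1)/2}\#F$ for $\ell\ne 0$ (again a Kloosterman consequence) and then sum $\nu_E$ trivially to $(\#E)^2$---note that the asymmetry is the opposite of what you proposed. For \eqref{lidl} the paper does not run a separate incidence argument; it simply substitutes the sharper pointwise bound $\sigma_E(r)\ll q^{-3}(\#E)^{3/2}$ quoted from \cite{CEHIK} into the same scheme.
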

Note that (\ref{lidl}) is superior to (\ref{federolf}) for $s=2$ if and
only if $\#E \gg q$. Finally, for fixed $E, F \in \ef_q^s$ and
given $j \in \ef_q$, we define
\begin{equation}
\label{mechanic}
  \nu(j) = \#\{(\mathbf{x}, \mathbf{y}) \in E \times F:
  |\mathbf{x}-\mathbf{y}|^2 =j\}.
\end{equation}

\section{Bounding the Fourier transform of a sphere}
In this section we collect some useful bounds on the Fourier
transform of a sphere in the finite fields setting.
\begin{lemma}
\label{kloostermania}
For $\mathbf{m} \in \ef_q^s$, let
\[
  \chi(\mathbf{m}) =
  \begin{cases}
    1 & \text{if $\mathbf{m}=\mathbf{0}$,} \\
    0 & \text{if $\mathbf{m} \ne \mathbf{0}$}.
  \end{cases}
\]
Then
\[
  \hat{S}_r(\mathbf{m}) = \frac{\chi(\mathbf{m})}{q}
  + q^{-\frac{s}{2}-1} c_q^s
  \sum_{j \in \ef_q^*} e\left( \frac{jr+|\mathbf{m}|^2 \bar{4}
  \bar{j}}{q} \right) \eta_q^s(j),
\]
where the complex number $c_q$ depends only on $q$ and $s$, such that $|c_q|=1$,
and where $\eta_q$ denotes a quadratic multiplicative character of $\ef_q^*$.
\end{lemma}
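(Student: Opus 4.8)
The plan is to start directly from the definition $\hat{S}_r(\mathbf{m}) = q^{-s} \sum_{\mathbf{a} \in \ef_q^s} e(-\mathbf{m}\mathbf{a}/q) S_r(\mathbf{a})$ and to detect the defining condition $|\mathbf{a}|^2 = r$ of the sphere by additive character orthogonality. Writing $S_r(\mathbf{a}) = q^{-1}\sum_{j \in \ef_q} e\bigl(j(|\mathbf{a}|^2 - r)/q\bigr)$ and interchanging the order of summation, I would split off the term $j=0$: there the sum over $\mathbf{a}$ collapses, by orthogonality, to $q^s \chi(\mathbf{m})$, contributing exactly $\chi(\mathbf{m})/q$, which is the first term in the claimed formula.

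For the remaining terms with $j \in \ef_q^*$ the key point is that both $|\mathbf{a}|^2 = \sum_i a_i^2$ and $\mathbf{m}\mathbf{a} = \sum_i m_i a_i$ split coordinate-wise, so the inner sum over $\mathbf{a}$ factors as $\prod_{i=1}^s \sum_{a \in \ef_q} e\bigl((ja^2 - m_i a)/q\bigr)$, a product of one-dimensional quadratic exponential sums. Completing the square as $ja^2 - m_i a = j(a - \overline{2}\,\overline{j}\,m_i)^2 - \overline{4}\,\overline{j}\,m_i^2$ — legitimate since $q$ is odd, so $2$ and hence $4$ are invertible — and substituting $a \mapsto a + \overline{2}\,\overline{j}\,m_i$, each factor becomes $e(-\overline{4}\,\overline{j}\,m_i^2/q)$ times the pure Gauss sum $\sum_{a} e(ja^2/q) = \eta_q(j) G$, where $G = \sum_{a \in \ef_q} e(a^2/q)$ and $\eta_q$ is the quadratic character. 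Taking the product over $i$ yields $\eta_q^s(j)\,G^s\, e(-\overline{4}\,\overline{j}\,|\mathbf{m}|^2/q)$, so that the $j \neq 0$ part equals $q^{-s-1} G^s \sum_{j \in \ef_q^*} \eta_q^s(j)\, e\bigl(-(jr + \overline{4}\,\overline{j}\,|\mathbf{m}|^2)/q\bigr)$.

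It then remains to bring this into the stated shape, and this is where I expect the only genuine care to be needed, namely in the Gauss-sum and sign bookkeeping rather than in any substantive difficulty. I would invoke the classical evaluation giving $|G| = q^{1/2}$; setting $c_q = q^{-1/2}G$ produces a unit-modulus constant with $q^{-s-1}G^s = q^{-s/2-1}c_q^s$. To reconcile the signs of the exponent with the statement I would substitute $j \mapsto -j$ in the sum over $\ef_q^*$; since $\overline{-j} = -\overline{j}$ this flips both exponents to $+(jr + \overline{4}\,\overline{j}\,|\mathbf{m}|^2)/q$ at the cost of a factor $\eta_q(-1)^s$, which I absorb by replacing $c_q$ with the still unimodular $\eta_q(-1)\,c_q$. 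The resulting expression matches the claimed formula, the whole argument hinging on the standard fact $|G|^2 = q$ to guarantee $|c_q| = 1$ and on tracking the inverse $\overline{j}$ and the character $\eta_q$ consistently through the change of variable.
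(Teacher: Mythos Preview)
Your argument is correct: detecting the sphere by additive characters, separating the $j=0$ term, factoring the $\mathbf{a}$-sum coordinate-wise, completing the square, and invoking $\sum_{a}e(ja^{2}/q)=\eta_q(j)G$ with $|G|=q^{1/2}$ gives exactly the claimed formula after the substitution $j\mapsto -j$ and absorption of $\eta_q(-1)$ into $c_q$. The paper does not prove this lemma at all but simply cites it as Lemma~4 of Iosevich--Koh \cite{IK}; your write-up supplies the standard direct computation that underlies that citation, so there is nothing to compare beyond noting that you have filled in what the paper outsources.
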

\begin{proof}
This is Lemma 4 in \cite{IK}.
\end{proof}
\begin{corollary}
\label{zoff}
Let $\mathbf{m} \ne \mathbf{0}$. Then
\[
  |\hat{S}_r(\mathbf{m})| \le q^{-s/2}.
\]
Moreover, still assuming $\mathbf{m} \ne \mathbf{0}$,
for $r \ne 0$ or odd $s$, the stronger bound
\[
  \hat{S}_r(\mathbf{m}) \ll q^{-\frac{s+1}{2}}
\]
holds true. Further, for $s \ge 2$ and $\mathbf{m}=\mathbf{0}$ we have
the bound
\[
  |\hat{S}_r(\mathbf{0})| \le \frac{2}{q}.
\]
Finally,
\[
  \hat{S}_0(\mathbf{m}) = c_q^s(q^{-s/2}-q^{-s/2-1})
\]
for $\mathbf{m} \ne \mathbf{0}$, $|\mathbf{m}|^2=0$ and even $s$, and
\[
  \hat{S}_0(\mathbf{m}) \ll q^{-s/2-1}
\]
for $\mathbf{m} \ne \mathbf{0}$, $|\mathbf{m}|^2 \ne 0$ and even $s$.
\end{corollary}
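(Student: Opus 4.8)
The plan is to derive every estimate directly from the explicit evaluation in Lemma~\ref{kloostermania}, so that the whole corollary reduces to bounding the character sum
\[
  T = \sum_{j \in \ef_q^*} e\left(\frac{jr + |\mathbf{m}|^2 \bar{4} \bar{j}}{q}\right) \eta_q^s(j)
\]
in various regimes and then reinstating the $\chi(\mathbf{m})/q$ term. The organising observation is that, since $\eta_q$ is quadratic, $\eta_q^s$ is the principal character when $s$ is even and equals $\eta_q$ when $s$ is odd; thus the parity of $s$ controls whether $T$ is a Kloosterman sum or a Sali\'e sum. I would therefore run a case analysis according to whether $\mathbf{m} = \mathbf{0}$, whether $r = 0$, and the parity of $s$, recording in each case whether $|\mathbf{m}|^2$ vanishes, as this is what causes the inner exponential argument to degenerate.

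First I would dispose of the elementary estimates. For the bound $|\hat{S}_r(\mathbf{m})| \le q^{-s/2}$ with $\mathbf{m} \ne \mathbf{0}$, the term $\chi(\mathbf{m})/q$ vanishes and the trivial estimate $|T| \le q - 1$ together with $|c_q| = 1$ gives $|\hat{S}_r(\mathbf{m})| \le q^{-s/2-1}(q-1) \le q^{-s/2}$. The two exact statements for even $s$ are the case $r = 0$: there $\eta_q^s$ is principal, so $T = \sum_{j \in \ef_q^*} e(|\mathbf{m}|^2 \bar{4}\bar{j}/q)$, which equals $q-1$ when $|\mathbf{m}|^2 = 0$ and equals $-1$ when $|\mathbf{m}|^2 \ne 0$ (a complete additive character sum over $\ef_q^*$). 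Substituting and simplifying $q^{-s/2-1}(q-1) = q^{-s/2} - q^{-s/2-1}$ yields both the identity $\hat{S}_0(\mathbf{m}) = c_q^s(q^{-s/2}-q^{-s/2-1})$ and the estimate $\hat{S}_0(\mathbf{m}) \ll q^{-s/2-1}$. For the bound $|\hat{S}_r(\mathbf{0})| \le 2/q$ I would set $\mathbf{m} = \mathbf{0}$, so $\chi(\mathbf{m}) = 1$ and $|\mathbf{m}|^2 = 0$, leaving $\hat{S}_r(\mathbf{0}) = q^{-1} + q^{-s/2-1} c_q^s U$ with $U = \sum_{j \in \ef_q^*} e(jr/q)\eta_q^s(j)$; evaluating $U$ as a complete character sum (namely $q-1$ or $-1$ for even $s$, and $0$ or a Gauss sum of modulus $q^{1/2}$ for odd $s$) and using $q^{-s/2} \le q^{-1}$ for $s \ge 2$ gives $|\hat{S}_r(\mathbf{0})| \le q^{-1} + q^{-1} = 2q^{-1}$ in every subcase.

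The substantive estimate is $\hat{S}_r(\mathbf{m}) \ll q^{-(s+1)/2}$ for $\mathbf{m} \ne \mathbf{0}$. For even $s$ with $r \ne 0$, the sum $T$ is a Kloosterman sum whose first argument $r$ is nonzero, so Weil's bound $|T| \le 2q^{1/2}$ applies (the degenerate case $|\mathbf{m}|^2 = 0$ giving simply $T = -1$), whence $|\hat{S}_r(\mathbf{m})| \le 2q^{-(s+1)/2}$. For odd $s$, $\eta_q^s = \eta_q$ and $T$ is a Sali\'e sum; for $r \ne 0$ one again has $|T| \ll q^{1/2}$, while the remaining case $r = 0$ collapses to $\sum_{j \in \ef_q^*} \eta_q(j) e(|\mathbf{m}|^2 \bar{4}\bar{j}/q)$, which is $0$ when $|\mathbf{m}|^2 = 0$ and a Gauss sum of modulus $q^{1/2}$ otherwise. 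This is exactly the hypothesis ``$r \ne 0$ or odd $s$''. The only genuinely non-elementary input is the square-root cancellation in the Kloosterman and Sali\'e sums, i.e. Weil's bound; that is where I expect the single real difficulty to lie, every other step being a routine evaluation of a complete character sum, and it would remain only to check that the implied constants absorb the factor $2$ uniformly in $s$.
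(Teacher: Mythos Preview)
Your proposal is correct and follows exactly the same approach as the paper: both derive every estimate from Lemma~\ref{kloostermania}, using the trivial bound on the $j$-sum for the first and third bounds, Weil's Kloosterman bound (even $s$) and the Sali\'e sum evaluation (odd $s$) for the second, and direct evaluation with $\eta_q^s$ trivial for the last two. Your write-up is simply more explicit in enumerating the subcases where the paper's proof just points to the relevant estimates.
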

\begin{proof}
The first and third bound follow immediately from Lemma 
\ref{kloostermania} on trivially bounding the sum over $j$.
For the second one we make use of Weil's seminal work (see for example
Corollary 11.12 in \cite{Bibel})
to bound the resulting Kloosterman sum over $j$ (even $s$), or use the
elementary evaluation of the Sali\'{e} sum
(see for example Lemma 12.4 in \cite{Bibel})
to bound the relevant sum over $j$ (odd $s$).
The last two bounds follow on evaluating the summation over $j$
after noting that the term $\eta_q^s(j)$ vanishes for even $s$.
\end{proof}
\begin{lemma}
\label{karte}
Let $s \ge 2$ and $r \in \ef_q^*$. Then
\begin{equation}
\label{wirklich}
  \sigma_E(r) \ll q^{-s-1} \# E + q^{-\frac{3s+1}{2}} (\#E)^2.
\end{equation}
This bound is also true for $r=0$ and odd $s$. Moreover,
for $s=2$, we also have the alternative bound
\begin{equation}
\label{zum_kotzen}
  \sigma_E(r) \ll q^{-3} (\#E)^{3/2}.
\end{equation}
\end{lemma}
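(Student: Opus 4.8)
The plan is to prove the three assertions separately, drawing the first two from the pointwise bounds of Corollary \ref{zoff} and the third from a restriction estimate for the circle. First I would rewrite the spherical average as a sum over pairs. Expanding the Fourier transforms and using $\sum_{\mathbf{a}\in S_r}e(-(\mathbf{m}-\mathbf{m}')\mathbf{a}/q)=q^s\hat{S}_r(\mathbf{m}-\mathbf{m}')$ gives
\[
  \sigma_E(r)=\sum_{|\mathbf{a}|^2=r}|\hat{E}(\mathbf{a})|^2
  =q^{-s}\sum_{\mathbf{m},\mathbf{m}'\in E}\hat{S}_r(\mathbf{m}-\mathbf{m}').
\]
I would then split off the diagonal $\mathbf{m}=\mathbf{m}'$. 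It contributes $q^{-s}(\#E)\hat{S}_r(\mathbf{0})$, which by the bound $|\hat{S}_r(\mathbf{0})|\le 2/q$ from Corollary \ref{zoff} is $\ll q^{-s-1}\#E$. For the off-diagonal terms one has $\mathbf{m}-\mathbf{m}'\ne\mathbf{0}$, and since we assume either $r\ne 0$ or $s$ odd, Corollary \ref{zoff} yields $\hat{S}_r(\mathbf{m}-\mathbf{m}')\ll q^{-(s+1)/2}$; bounding the number of such pairs by $(\#E)^2$ gives a contribution $\ll q^{-s-(s+1)/2}(\#E)^2=q^{-(3s+1)/2}(\#E)^2$. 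Adding the two pieces yields \eqref{wirklich}, and the hypothesis ``$r\ne 0$ or $s$ odd'' is precisely the condition under which the stronger off-diagonal estimate applies, explaining why the bound extends to $r=0$ only in odd dimension.

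For \eqref{zum_kotzen} I would write $T_E(\mathbf{a})=\sum_{\mathbf{m}\in E}e(-\mathbf{m}\mathbf{a}/q)$, so that $\hat{E}=q^{-2}T_E$ and $\sigma_E(r)=q^{-4}\sum_{\mathbf{a}\in S_r}|T_E(\mathbf{a})|^2$. Thus \eqref{zum_kotzen} is equivalent to the $L^2$-restriction bound $\sum_{\mathbf{a}\in S_r}|T_E(\mathbf{a})|^2\ll q(\#E)^{3/2}$. I would obtain this by duality from the extension estimate
\[
  \Bigl(\sum_{\mathbf{x}\in\ef_q^2}|\mathcal{E}g(\mathbf{x})|^4\Bigr)^{1/4}
  \ll q^{1/2}\Bigl(\sum_{\mathbf{a}\in S_r}|g(\mathbf{a})|^2\Bigr)^{1/2},
  \qquad \mathcal{E}g(\mathbf{x})=\sum_{\mathbf{a}\in S_r}g(\mathbf{a})e(\mathbf{a}\mathbf{x}/q).
\]
The adjoint of $\mathcal{E}\colon L^2(S_r)\to L^4(\ef_q^2)$ is the map $h\mapsto T_h|_{S_r}$, which sends $L^{4/3}(\ef_q^2)\to L^2(S_r)$ with the same norm; applied to $h=E$, for which $\|E\|_{4/3}=(\#E)^{3/4}$, it gives exactly $\sum_{\mathbf{a}\in S_r}|T_E(\mathbf{a})|^2\ll q(\#E)^{3/2}$, and dividing by $q^4$ yields \eqref{zum_kotzen}.

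It then remains to prove the extension estimate. Squaring $\mathcal{E}g$ and using orthogonality,
\[
  \sum_{\mathbf{x}}|\mathcal{E}g(\mathbf{x})|^4=q^2\sum_{\mathbf{w}}|c(\mathbf{w})|^2,
  \qquad c(\mathbf{w})=\sum_{\substack{\mathbf{a},\mathbf{b}\in S_r\\ \mathbf{a}+\mathbf{b}=\mathbf{w}}}g(\mathbf{a})g(\mathbf{b}).
\]
For $\mathbf{w}\ne\mathbf{0}$ the conditions $|\mathbf{a}|^2=r$ and $|\mathbf{w}-\mathbf{a}|^2=r$ force $\mathbf{a}$ onto the line $\mathbf{w}\mathbf{a}=|\mathbf{w}|^2\overline{2}$ as well as onto $S_r$, so there are at most two such $\mathbf{a}$ (a line meets the nondegenerate conic $S_r$ in at most two points, using $r\ne0$ and $q$ odd); Cauchy--Schwarz then gives $\sum_{\mathbf{w}\ne\mathbf{0}}|c(\mathbf{w})|^2\ll(\sum_{\mathbf{a}\in S_r}|g(\mathbf{a})|^2)^2$. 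I expect the antipodal term $\mathbf{w}=\mathbf{0}$ to be the one point requiring care: here $-\mathbf{a}\in S_r$ for every $\mathbf{a}\in S_r$, so the two-point count degenerates and $c(\mathbf{0})=\sum_{\mathbf{a}\in S_r}g(\mathbf{a})g(-\mathbf{a})$ runs over all of $S_r$. This term is instead controlled directly by Cauchy--Schwarz, $|c(\mathbf{0})|\le\sum_{\mathbf{a}\in S_r}|g(\mathbf{a})|^2$, after which $\sum_{\mathbf{w}}|c(\mathbf{w})|^2\ll(\sum_{\mathbf{a}\in S_r}|g(\mathbf{a})|^2)^2$ and the extension estimate follow.

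The main obstacle is thus the antipodal contribution $\mathbf{w}=\mathbf{0}$, a genuine consequence of the symmetry $S_r=-S_r$; every other ingredient is either the elementary line--circle intersection bound or formal duality. It is worth emphasising that, in contrast to \eqref{wirklich}, no cancellation in Kloosterman sums enters the proof of \eqref{zum_kotzen}: soft arguments that invoke only square-root cancellation of the relevant Kloosterman sums on average lose a factor $q^{1/2}$ and fail for $\#E\gg q$, which is exactly the range in which \eqref{zum_kotzen} improves on \eqref{wirklich}, and it is the two-point geometry of the circle that supplies this missing factor.
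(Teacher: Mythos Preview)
Your argument for \eqref{wirklich} is exactly the paper's: rewrite $\sigma_E(r)=q^{-s}\sum_{\mathbf{x},\mathbf{y}\in E}\hat{S}_r(\mathbf{y}-\mathbf{x})$, split off the diagonal, and feed in the two bounds from Corollary~\ref{zoff}. Nothing to add there.

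For \eqref{zum_kotzen} you do more than the paper, which simply quotes Lemma~4.4 of \cite{CEHIK} without argument. Your route---dualise the $L^2(S_r)\to L^4(\ef_q^2)$ extension inequality to a restriction bound, and prove the extension inequality by expanding $\|\mathcal{E}g\|_4^4$ via Plancherel and the two-point incidence of a line with the conic $S_r$---is correct and is in fact the standard mechanism behind that lemma. The handling of the antipodal term $\mathbf{w}=\mathbf{0}$ is fine, and your remark that $r\ne 0$ is what makes $S_r$ a smooth conic (so that every line meets it in at most two points) is the right justification; note that this is consistent with the lemma, whose hypothesis $r\in\ef_q^*$ governs the clause for $s=2$ as well. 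So your proof is self-contained where the paper defers to a citation; the trade-off is only length versus transparency, not a genuinely different idea.
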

\begin{proof}
The bound \eqref{wirklich} for $r \ne 0$ is essentially Lemma 1.8 in
\cite{IR}, but in order to cover the case $r=0$ and odd $s$ as well let
us give a complete proof. We have
\begin{align*}
  \sigma_E(r) & = \sum_{\mathbf{m} \in \ef_q^s:|\mathbf{m}|^2=r}
  |\hat{E}(\mathbf{m})|^2
  = \sum_{\mathbf{m} \in \ef_q}
  \overline{\hat{E}(\mathbf{m})} \hat{E}(\mathbf{m}) S_r(\mathbf{m}) \\
  & = q^{-2s} \sum_{\mathbf{m} \in \ef_q^s}
  \sum_{\mathbf{x} \in \ef_q^s} E(\mathbf{x})
  e\left(\frac{\mathbf{m}\mathbf{x}}{q}\right)
  \sum_{\mathbf{y} \in \ef_q^s} E(\mathbf{y})
  e\left(\frac{-\mathbf{m}\mathbf{y}}{q}\right)
  S_r(\mathbf{m}) \\
  & = q^{-2s} \sum_{\mathbf{x}, \mathbf{y} \in \ef_q^s}
  E(\mathbf{x}) E(\mathbf{y}) \sum_{\mathbf{m} \in \ef_q^s}
  e\left(\frac{\mathbf{m}(\mathbf{x}-\mathbf{y})}{q}\right)
  S_r(\mathbf{m}) \\
  & = q^{-s} \sum_{\mathbf{x}, \mathbf{y} \in \ef_q^s}
  E(\mathbf{x}) E(\mathbf{y}) \hat{S}_r(\mathbf{y}-\mathbf{x}) \\
  & \le q^{-s} \# E \cdot |\hat{S}_r(\mathbf{0}) |+
  q^{-s} (\#E)^2 \max_{\mathbf{m} \in \ef_q^s \backslash\{\mathbf{0}\}}
  |\hat{S}_r(\mathbf{m})|.
\end{align*}
Corollary \ref{zoff} now yields \eqref{wirklich}. The second bound
(\ref{zum_kotzen}) is Lemma 4.4 in \cite{CEHIK}.
\end{proof}

\section{Proof of Lemma \ref{tavanic}}
Clearly, by Plancherel's formula,
\[
  \sigma_F(r) \le \sum_{\mathbf{a} \in \ef_q^s}
  |\hat{F}(\mathbf{a})|^2 = \frac{|F|}{q^s} \le 1,
\]
and the same bound holds true for $\sigma_E(r)$.
Hence, on writing
\[
  T_i = \sum_{r \in \ef_q^*:2^{i-1} \le \sigma_F(r) \le 2^i}
  \sigma_E(r) \sigma_F(r)
\]
for $i \in \zet$,
by a dyadic intersection of the range of possible values of $\sigma_F$
we find that
\[
  \sum_{r \in \ef_q^*} \sigma_E(r) \sigma_F(r) \le q^{-4s+1} +
  \sum_{-4s \frac{\log q}{\log 2} \le i \le 0} T_i
  \ll q^{-4s+1} +
  \log q \cdot \max_{-4s \frac{\log q}{\log 2} \le i \le 0} T_i.
\]
We conclude that there exists a subset $M \subset \ef_q^*$ such that
\begin{equation}
\label{M}
  \sum_{r \in \ef_q^*} \sigma_E(r) \sigma_F(r) \ll q^{-4s+1} + \log q
  \sum_{r \in M} \sigma_E(r) \sigma_F(r)
\end{equation}
and
\begin{equation}
\label{prospectus}
  A \le \sigma_F(r) \le 2A
\end{equation}
for all $r \in M$, for a suitable positive constant $A$.
By Cauchy-Schwarz,
\begin{equation}
\label{N}
  \sum_{r \in M} \sigma_E(r) \sigma_F(r) \le
  \left( \sum_{r \in M} \sigma_E(r)^2 \right)^{1/2}
  \left( \sum_{r \in M} \sigma_F(r)^2 \right)^{1/2}.
\end{equation}
Let us first bound $\sum_{r \in M} \sigma_E(r)^2$. Using Lemma
\ref{karte}, we obtain 
\begin{equation}
\label{L}
  \sum_{r \in M} \sigma_E(r)^2 \le \left(
  \max_{t \in \ef_q^*} \sigma_E(t) \right)^2 \#M \ll (\#M) \left(
  q^{-2s-2} (\#E)^2 + q^{-3s-1} (\#E)^4 \right)
\end{equation}
in general, and for $s=2$ we also obtain the alternative bound
\begin{equation}
\label{alternative}
  \sum_{r \in M} \sigma_E(r)^2 \ll (\#M) q^{-6} (\#E)^3.
\end{equation}
Next, let us  bound $\sum_{r \in M} \sigma_F(r)^2$.
\begin{lemma}
\label{von1717}
We have
\[
  \sum_{r \in \ef_q} \sigma_F(r) = q^{-s} \#F.
\]
\end{lemma}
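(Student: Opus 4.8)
The plan is to recognise the left-hand side as a sum of $|\hat{F}(\mathbf{a})|^2$ over all of $\ef_q^s$ and then invoke Plancherel's formula, which reduces everything to a one-line computation.

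First I would unfold the definition of $\sigma_F(r)$ and interchange the order of summation. Every point $\mathbf{a} \in \ef_q^s$ lies on exactly one sphere $S_r$, namely the one with $r = |\mathbf{a}|^2$, so the spheres $\{S_r\}_{r \in \ef_q}$ partition $\ef_q^s$. Consequently, summing $\sigma_F(r)$ over all $r \in \ef_q$ simply reassembles the full sum over $\mathbf{a}$:
\[
  \sum_{r \in \ef_q} \sigma_F(r)
  = \sum_{r \in \ef_q} \sum_{\mathbf{a} \in \ef_q^s:\, |\mathbf{a}|^2 = r}
  |\hat{F}(\mathbf{a})|^2
  = \sum_{\mathbf{a} \in \ef_q^s} |\hat{F}(\mathbf{a})|^2.
\]

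Next I would apply Plancherel's formula from Section 2 to rewrite the right-hand side as $q^{-s} \sum_{\mathbf{x} \in \ef_q^s} |F(\mathbf{x})|^2$. Finally, since $F$ denotes the characteristic function of the set $F$, we have $|F(\mathbf{x})|^2 = F(\mathbf{x})$, so the remaining sum counts the elements of $F$ and equals $\#F$. This yields $q^{-s} \#F$, as claimed.

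There is no genuine obstacle here: the argument is a direct calculation, and the only point requiring any attention is the observation that the spheres exhaust $\ef_q^s$ without overlap, which is immediate from the definition $S_r = \{\mathbf{x} \in \ef_q^s : |\mathbf{x}|^2 = r\}$. It is worth noting that the statement sums over all of $\ef_q$, including $r = 0$, which causes no difficulty precisely because we are only reindexing the Plancherel sum rather than estimating any individual $\sigma_F(r)$.
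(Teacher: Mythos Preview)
Your proposal is correct and follows exactly the same approach as the paper: reassemble the spherical sums into the full Plancherel sum and then use $F(\mathbf{x})^2=F(\mathbf{x})$ to obtain $q^{-s}\#F$. There is nothing to add.
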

\begin{proof}
Since
\[
  \sum_{r \in \ef_q} \sigma_F(r) = \sum_{\mathbf{a} \in \ef_q^s}
  |\hat{F}(\mathbf{a})|^2,
\]
the result follows immediately from Plancherel's formula
\[
  \sum_{\mathbf{a} \in \ef_q^s} |\hat{F}(\mathbf{a})|^2
  = q^{-s} \sum_{\mathbf{a} \in \ef_q^s} F(a)^2 = q^{-s} \#F.
\]
\end{proof}
We start with the observation that by (\ref{prospectus}), we have
\begin{equation}
\label{cdu}
  \sum_{r \in M} \sigma_F(r)^2 \le 4 \cdot \#M \cdot A^2.
\end{equation}
Next, by Lemma \ref{von1717},
\begin{equation}
\label{spd}
  q^{-2s} (\#F)^2 = \left( \sum_{r \in \ef_q} \sigma_F(r) \right)^2
  = \sum_{m, n \in \ef_q} \sigma_F(m) \sigma_F(n).
\end{equation}
Moreover, by (\ref{prospectus}),
\begin{equation}
\label{fdp}
   \sum_{m, n \in \ef_q} \sigma_F(m) \sigma_F(n) \ge
   \sum_{m, n \in M} \sigma_F(m) \sigma_F(n) \gg (\#M)^2 A^2.
\end{equation}
By (\ref{cdu}), (\ref{spd}), and (\ref{fdp}) we obtain
\begin{align}
\label{kubus}
  \sum_{r \in M} \sigma_F(r)^2 & \ll \#M \cdot A^2
  \ll (\#M)^{-1}
   \sum_{m, n \in M} \sigma_F(m) \sigma_F(n) \nonumber \\
  & \ll (\#M)^{-1} q^{-2s} (\#F)^2.
\end{align}
Summarising (\ref{M}), (\ref{N}), (\ref{L}) and
(\ref{kubus}), and noting that
\[
  q^{-4s+1} \ll (\log q)q^{-2s-1}(\#E)(\#F)
\]
since $\#E, \#F \ge 1$, we obtain
\[
  \sum_{r \in \ef_q^*} \sigma_E(r) \sigma_F(r) \ll
  (\log q) \left( q^{-2s-1} (\#E) (\#F) +
  q^{-\frac{5s+1}{2}} (\#E)^2 (\#F) \right).
\]
In case of odd $s$, Lemma \ref{karte} also applies for $r=0$, so
in the argument above we can replace $\ef_q^*$ by $\ef_q$, this way
arriving at \eqref{kiste}.
Further,
using (\ref{alternative}) instead of (\ref{L}), for $s=2$ we also obtain
\[
  \sum_{r \in \ef_q^*} \sigma_E(r) \sigma_F(r) \ll
  (\log q) q^{-5} (\#E)^{3/2} (\#F).
\]
This completes the proof of Lemma \ref{tavanic}. \qed

\section{Preparations for the proof of Theorem \ref{chaos}}
Before we embark on the proof of Theorem \ref{chaos}, we first need to
collect some useful lemmata.
\begin{lemma}
\label{insel}
Let $j \in \ef_q$. Then
\[
  \nu(j) = q^{2s} \sum_{\mathbf{m} \in \ef_q^s}
  \hat{S}_j(\mathbf{m}) \overline{\hat{E}(\mathbf{m})} \hat{F}(\mathbf{m}).
\]
\end{lemma}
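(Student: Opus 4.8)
The plan is to express $\nu(j)$ as a Fourier-analytic sum and then recognise the two inner summations as Fourier transforms of $E$ and $F$. First I would rewrite the counting function in \eqref{mechanic} using the characteristic function of the sphere $S_j$: since the condition $|\mathbf{x}-\mathbf{y}|^2=j$ holds precisely when $S_j(\mathbf{x}-\mathbf{y})=1$, we have
\[
  \nu(j) = \sum_{\mathbf{x}, \mathbf{y} \in \ef_q^s}
  E(\mathbf{x}) F(\mathbf{y}) S_j(\mathbf{x}-\mathbf{y}).
\]

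Next I would apply the Fourier inversion formula to the sphere, writing $S_j(\mathbf{x}-\mathbf{y}) = \sum_{\mathbf{m} \in \ef_q^s} e(\mathbf{m}(\mathbf{x}-\mathbf{y})/q)\, \hat{S}_j(\mathbf{m})$, and then interchange the (finite) order of summation. Since the character factorises as $e(\mathbf{m}(\mathbf{x}-\mathbf{y})/q) = e(\mathbf{m}\mathbf{x}/q)\, e(-\mathbf{m}\mathbf{y}/q)$, the expression splits across $\mathbf{x}$ and $\mathbf{y}$:
\[
  \nu(j) = \sum_{\mathbf{m} \in \ef_q^s} \hat{S}_j(\mathbf{m})
  \left( \sum_{\mathbf{x} \in \ef_q^s} E(\mathbf{x})
  e\!\left(\tfrac{\mathbf{m}\mathbf{x}}{q}\right) \right)
  \left( \sum_{\mathbf{y} \in \ef_q^s} F(\mathbf{y})
  e\!\left(\tfrac{-\mathbf{m}\mathbf{y}}{q}\right) \right).
\]

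Finally I would identify the two inner sums in terms of the Fourier transform. From the definition of $\hat{E}$ together with the fact that $E$ is real-valued, one has $\overline{\hat{E}(\mathbf{m})} = q^{-s}\sum_{\mathbf{x}} e(\mathbf{m}\mathbf{x}/q) E(\mathbf{x})$, so the first bracket equals $q^s\, \overline{\hat{E}(\mathbf{m})}$; the second bracket equals $q^s\, \hat{F}(\mathbf{m})$ directly from the definition of $\hat{F}$. Substituting these two identities produces the factor $q^{2s}$ and the claimed formula. There is no genuine obstacle here; the argument is a routine application of inversion and the interchange of finite sums. The only point demanding care is the bookkeeping of the complex conjugate: the summation carrying the character $e(\mathbf{m}\mathbf{x}/q)$ yields $\overline{\hat{E}(\mathbf{m})}$ rather than $\hat{E}(\mathbf{m})$, which is exactly the conjugated factor appearing against $\hat{F}(\mathbf{m})$ in the statement.
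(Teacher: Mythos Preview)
Your proposal is correct and follows essentially the same route as the paper: write $\nu(j)$ as $\sum_{\mathbf{x},\mathbf{y}} E(\mathbf{x})F(\mathbf{y})S_j(\mathbf{x}-\mathbf{y})$, apply Fourier inversion to $S_j$, separate the character, and identify the resulting inner sums as $q^s\,\overline{\hat E(\mathbf{m})}$ and $q^s\,\hat F(\mathbf{m})$. Your remark about why the conjugate lands on $\hat E$ is exactly the one bookkeeping point worth flagging.
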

\begin{proof}
We have
\begin{align*}
  \nu(j) & = \sum_{\mathbf{x}, \mathbf{y} \in \ef_q^s}
  E(\mathbf{x}) F(\mathbf{y}) S_j(\mathbf{x}-\mathbf{y}) \\
  & = \sum_{\mathbf{x}, \mathbf{y} \in \ef_q^s} E(\mathbf{x})
  F(\mathbf{y}) \sum_{\mathbf{m} \in \ef_q^s}
  e\left(\frac{(\mathbf{x}-\mathbf{y})\mathbf{m}}{q}\right)
  \hat{S}_j(\mathbf{m}) \\
  & = \sum_{\mathbf{m} \in \ef_q^s} \hat{S}_j(\mathbf{m})
  \left( \sum_{\mathbf{x} \in \ef_q^s} E(\mathbf{x})
  e\left(\frac{\mathbf{x}\mathbf{m}}{q} \right) \right)
  \left( \sum_{\mathbf{y} \in \ef_q^s} F(\mathbf{y})
  e\left(\frac{-\mathbf{y}\mathbf{m}}{q} \right) \right) \\
  & = q^{2s} \sum_{\mathbf{m} \in \ef_q^s}
  \hat{S}_j(\mathbf{m}) \overline{\hat{E}(\mathbf{m})} \hat{F}(\mathbf{m}).
\end{align*}
\end{proof}

\begin{lemma}
\label{fueller}
Let $s \ge 2$ and $(\#E)(\#F) \ge 900q^s$. Then
\[
  \nu(0) \le \frac{21}{30} (\# E) (\# F).
\]
\end{lemma}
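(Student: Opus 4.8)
The plan is to start from the Fourier-analytic identity for $\nu(0)$ supplied by Lemma~\ref{insel},
\[
  \nu(0) = q^{2s} \sum_{\mathbf{m} \in \ef_q^s}
  \hat{S}_0(\mathbf{m}) \overline{\hat{E}(\mathbf{m})} \hat{F}(\mathbf{m}),
\]
and to split the sum into the zero-frequency term $\mathbf{m}=\mathbf{0}$, which will furnish the main term, and the remaining contribution from $\mathbf{m}\ne\mathbf{0}$, which I will treat as an error term and bound by its absolute value.

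For the main term I would use that $\hat{E}(\mathbf{0}) = q^{-s}\#E$ and $\hat{F}(\mathbf{0}) = q^{-s}\#F$, both real and positive, so that the $\mathbf{m}=\mathbf{0}$ summand equals exactly $\hat{S}_0(\mathbf{0})(\#E)(\#F)$. Corollary~\ref{zoff} gives $\hat{S}_0(\mathbf{0}) = |\hat{S}_0(\mathbf{0})| \le 2/q$, and since $q \ge 3$ we have $2/q \le 2/3 = 20/30$, so the main term is at most $\tfrac{20}{30}(\#E)(\#F)$. (For odd $s$ this term is in fact only $\tfrac{1}{q}(\#E)(\#F)$; it is the even-dimensional case that makes the factor $2/q$ necessary.)

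For the error term, note that $\nu(0)$ and the main term are both real, so the contribution of $\mathbf{m}\ne\mathbf{0}$ is real and I may bound it in absolute value by
\[
  q^{2s} \sum_{\mathbf{m} \ne \mathbf{0}}
  |\hat{S}_0(\mathbf{m})|\,|\hat{E}(\mathbf{m})|\,|\hat{F}(\mathbf{m})|.
\]
The uniform estimate $|\hat{S}_0(\mathbf{m})| \le q^{-s/2}$ from Corollary~\ref{zoff} pulls out a factor $q^{-s/2}$, and then Cauchy--Schwarz together with Plancherel's formula $\sum_{\mathbf{m}} |\hat{E}(\mathbf{m})|^2 = q^{-s}\#E$ and its analogue for $F$ yields
\[
  \sum_{\mathbf{m} \ne \mathbf{0}} |\hat{E}(\mathbf{m})|\,|\hat{F}(\mathbf{m})|
  \le q^{-s} (\#E)^{1/2}(\#F)^{1/2}.
\]
Multiplying through by $q^{2s}q^{-s/2}$ bounds the error by $q^{s/2}(\#E)^{1/2}(\#F)^{1/2}$.

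Finally, the hypothesis $(\#E)(\#F) \ge 900 q^s$ is precisely what renders this error admissible: taking square roots it reads $30 q^{s/2} \le (\#E)^{1/2}(\#F)^{1/2}$, whence $q^{s/2}(\#E)^{1/2}(\#F)^{1/2} \le \tfrac{1}{30}(\#E)(\#F)$. Adding the two estimates gives $\nu(0) \le \left(\tfrac{20}{30} + \tfrac{1}{30}\right)(\#E)(\#F) = \tfrac{21}{30}(\#E)(\#F)$, as claimed. I do not anticipate a genuine obstacle: the argument is essentially a single application of Cauchy--Schwarz and Plancherel to the tail, and the only delicate point is the bookkeeping of constants, where the choices $900 = 30^2$ in the hypothesis and $2/q \le 20/30$ in the main term have evidently been calibrated so that the two contributions sum to exactly $21/30$.
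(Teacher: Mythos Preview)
Your proposal is correct and follows essentially the same approach as the paper: split the Fourier expansion of $\nu(0)$ from Lemma~\ref{insel} into the $\mathbf{m}=\mathbf{0}$ term bounded via $|\hat{S}_0(\mathbf{0})|\le 2/q\le 2/3$, and the tail bounded by Cauchy--Schwarz and Plancherel to get $q^{s/2}(\#E)^{1/2}(\#F)^{1/2}\le \tfrac{1}{30}(\#E)(\#F)$. The constants and the logic match the paper's proof exactly.
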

\begin{proof}
Since
\[
  \overline{\hat{E}(\mathbf{0})} = q^{-s} \# E
\]
and
\[
  \hat{F}(\mathbf{0}) = q^{-s} \# F,
\]
Lemma \ref{insel} yields
\[
  \nu(0) = (\#E) (\#F) \hat{S}_0(\mathbf{0}) + \delta,
\]
where
\[
  \delta = q^{2s} \sum_{\mathbf{m} \in \ef_q^s: \mathbf{m} \ne
  \mathbf{0}} \hat{S}_0(\mathbf{m}) \overline{\hat{E}(\mathbf{m})}
  \hat{F}(\mathbf{m}).
\]
By Corollary \ref{zoff}, it follows that
\[
  \nu(0) \le \frac{2(\#E)(\#F)}{q} + |\delta|.
\]
Moreover, Corollary \ref{zoff} gives
\[
  \left|\hat{S}_0(\mathbf{m})\right| \le q^{-s/2}
\]
for $\mathbf{m} \ne \mathbf{0}$. Hence, by Cauchy-Schwarz and
Plancherel's formula,
\begin{align*}
  |\delta| & \le q^{\frac{3}{2}s}
  \left( \sum_{\mathbf{m} \in \ef_q^s} |\hat{E}(\mathbf{m})|^2 \right)^{1/2}
  \left( \sum_{\mathbf{m} \in \ef_q^s} |\hat{F}(\mathbf{m})|^2 \right)^{1/2}
  \\ & \le q^{s/2} (\#E)^{1/2} (\#F)^{1/2}.
\end{align*}
Since $(\#E)(\#F) \ge 900q^s$, we conclude that
\[
  |\delta| \le \frac{(\#E)(\#F)}{30}.
\]
Therefore, since $q \ge 3$, we have
\[
  \nu(0) \le 2 \frac{(\#E)(\#F)}{q} + |\delta| \le
  \frac{21}{30} (\#E)(\#F).
\]
\end{proof}

\begin{lemma}
\label{rollei}
We have
\begin{align*}
  \sum_{j \in \ef_q} \nu(j)^2 & \le
  \frac{(\#E)^2(\#F)^2}{q} + q^{s-1} (\#E)(\#F)\\
  & + q^{3s} \left|\sigma_{E,F}(0)\right|^2
  + q^{3s} \sum_{r \in \ef_q^*} \sigma_E(r) \sigma_F(r)\\
  & \le \frac{(\#E)^2(\#F)^2}{q} +
  q^{3s} \sum_{r \in \ef_q} \sigma_E(r) \sigma_F(r)
  + q^{s-1} (\#E)(\#F). \\
\end{align*}
\end{lemma}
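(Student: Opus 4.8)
The plan is to compute $\sum_{j \in \ef_q} \nu(j)^2$ by expanding the square using Lemma \ref{insel}, which expresses each $\nu(j)$ as a sum over $\mathbf{m} \in \ef_q^s$ of $\hat{S}_j(\mathbf{m}) \overline{\hat{E}(\mathbf{m})} \hat{F}(\mathbf{m})$. When we square and sum over $j$, the key simplification comes from the orthogonality of the additive characters appearing in the Kloosterman-type expansion of $\hat{S}_j$ from Lemma \ref{kloostermania}. More precisely, writing $\nu(j)^2 = \nu(j)\overline{\nu(j)}$ (noting $\nu(j)$ is real, being a counting function) and summing over $j$, I expect the sum over $j$ of $\hat{S}_j(\mathbf{m}) \overline{\hat{S}_j(\mathbf{n})}$ to collapse, forcing a relation between $|\mathbf{m}|^2$ and $|\mathbf{n}|^2$.

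The concrete first step is to substitute the formula from Lemma \ref{kloostermania} into $\nu(j)^2 = q^{4s} \sum_{\mathbf{m}, \mathbf{n}} \hat{S}_j(\mathbf{m}) \overline{\hat{E}(\mathbf{m})} \hat{F}(\mathbf{m}) \overline{\hat{S}_j(\mathbf{n})} \hat{E}(\mathbf{n}) \overline{\hat{F}(\mathbf{n})}$, and then sum over $j \in \ef_q$. Collecting the $j$-dependence, I would separate the contribution of the main term $\chi(\mathbf{m})/q$ (giving the $\mathbf{m} = \mathbf{0}$ or $\mathbf{n} = \mathbf{0}$ pieces, which produce the $(\#E)^2(\#F)^2/q$ term since $\hat{E}(\mathbf{0}) = q^{-s}\#E$ and similarly for $F$) from the contribution of the oscillatory Salié/Kloosterman part. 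For the oscillatory part, the phase is $e((jr + |\mathbf{m}|^2 \bar{4}\bar{j})/q)$, so the sum over $j$ of a product of two such terms (one for $\mathbf{m}$, one conjugated for $\mathbf{n}$) will, after using character orthogonality in $j$, vanish unless $|\mathbf{m}|^2 = |\mathbf{n}|^2$. This is exactly the mechanism that produces the diagonal sum $\sum_{r} \sigma_E(r)\sigma_F(r)$: when $|\mathbf{m}|^2 = |\mathbf{n}|^2 = r$, the $\mathbf{m}$-sum and $\mathbf{n}$-sum factor into $\overline{\hat{E}(\mathbf{m})}\hat{F}(\mathbf{m})$ summed against $\hat{E}(\mathbf{n})\overline{\hat{F}(\mathbf{n})}$, which assembles into $|\sigma_{E,F}(r)|^2$, and then Cauchy--Schwarz bounds $|\sigma_{E,F}(r)|^2 \le \sigma_E(r)\sigma_F(r)$.

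The structural shape of the answer then separates the $r=0$ term, recorded as $q^{3s}|\sigma_{E,F}(0)|^2$, from the $r \in \ef_q^*$ terms, recorded as $q^{3s}\sum_{r \in \ef_q^*} \sigma_E(r)\sigma_F(r)$; the second displayed inequality in the lemma is just the recombination $|\sigma_{E,F}(0)|^2 \le \sigma_E(0)\sigma_F(0)$ folding the $r=0$ term back into a sum over all of $\ef_q$. The remaining term $q^{s-1}(\#E)(\#F)$ should arise as an error term from the imperfect orthogonality: the character sum over $j \in \ef_q^*$ (excluding $j$ where the phase degenerates) is not exactly orthogonal because we are missing the $j=0$ term, or because the $\eta_q^s(j)$ factors and the constant $c_q$ must be tracked. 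I would isolate this by adding and subtracting the full sum over $\ef_q$, with the discrepancy controlled by Plancherel applied to $\hat E$ and $\hat F$.

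The main obstacle will be the careful bookkeeping of the cross terms where exactly one of $\mathbf{m}, \mathbf{n}$ is zero, together with the precise constant and quadratic-character factors $c_q^s$ and $\eta_q^s(j)$ in the Kloosterman expansion. These factors multiply in the product for $\mathbf{m}$ and $\mathbf{n}$, and one must verify that upon summing over $j$ they do not obstruct the orthogonality that forces $|\mathbf{m}|^2 = |\mathbf{n}|^2$; I expect that $|c_q| = 1$ makes the constant harmless and that $\eta_q^s(j)\overline{\eta_q^s(j)} = 1$ on $\ef_q^*$, so the quadratic characters cancel cleanly, but confirming this cancellation and correctly accounting for the leftover $q^{s-1}(\#E)(\#F)$ error from the incomplete $j$-sum is the delicate part of the computation.
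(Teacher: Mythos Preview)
Your proposal is correct and follows essentially the same route as the paper: expand $\nu(j)^2$ via Lemma~\ref{insel}, insert the formula from Lemma~\ref{kloostermania}, observe that the cross terms (one $\chi$-term against one oscillatory term) vanish after summing over $j$, and then compute the remaining diagonal $T(\mathbf{m},\mathbf{n})$, which equals $q^2-q$ when $|\mathbf{m}|^2=|\mathbf{n}|^2$ and $-q$ otherwise, with the $-q$ part producing the $q^{s-1}(\#E)(\#F)$ term via Plancherel and the diagonal part handled by Cauchy--Schwarz exactly as you describe. The only small sharpening needed in your sketch is that the ``imperfect orthogonality'' you anticipate occurs in the inner Kloosterman variable (the $k$-sum over $\ef_q^*$), not in the outer $j$-sum, but you have already flagged this bookkeeping as the point requiring care.
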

\begin{proof}
By Lemma \ref{insel} and Lemma \ref{kloostermania}, we have
\begin{align*}
  \sum_{j \in \ef_q} \nu(j)^2 & = q^{4s}
  \sum_{j \in \ef_q} \sum_{\mathbf{m}, \mathbf{n} \in \ef_q^s}
  \hat{S}_j(\mathbf{m})
  \overline{S_j(\mathbf{n})}
  \overline{\hat{E}(\mathbf{m})} \hat{F}(\mathbf{m})
  \hat{E}(\mathbf{n}) \overline{\hat{F}(\mathbf{n})} \\
  & = q^{4s} \sum_{\mathbf{m}, \mathbf{n} \in \ef_q^s}
  \overline{\hat{E}(\mathbf{m})} \hat{F}(\mathbf{m})
  \hat{E}(\mathbf{n}) \overline{\hat{F}(\mathbf{n})}
  \sum_{j \in \ef_q} \times \\
  & \times \left( \frac{\chi(\mathbf{m})}{q} + q^{-s/2-1} c_q^s
  \sum_{k \in \ef_q^*} e\left(\frac{kj+|\mathbf{m}|^2 \bar{4}
  \bar{k}}{q}\right) \eta_q^s(k) \right) \\
  & \times \left( \frac{\chi(\mathbf{n})}{q} + q^{-s/2-1} \overline{c}_q^s
  \sum_{l \in \ef_q^*} e\left(\frac{-lj-|\mathbf{n}|^2 \bar{4}
  \bar{l}}{q}\right) \eta_q^s(l) \right). \\
\end{align*}
We are now going to expand the product and interchange the order of
summation of $j$ and $k, l$. Since
\[
  \sum_{l \in \ef_q^*} \sum_{j \in \ef_q}
  e\left(\frac{-lj-|\mathbf{n}|^2 \bar{4} \bar{l}}{q} \right)
  \eta_q^s(l) = 0,
\]
the two cross terms turn out to be zero. Moreover,
\[
  \hat{E}(\mathbf{0}) = \overline{\hat{E}(\mathbf{0})} =
  q^{-s} \# E
\]
and
\[
  \hat{F}(\mathbf{0}) = \overline{\hat{F}(\mathbf{0})} =
  q^{-s} \# F.
\]
Therefore,
\[
  \sum_{j \in \ef_q} \nu(j)^2 =
  \frac{(\#E)^2 (\#F)^2}{q} + q^{3s-2} 
  \sum_{\mathbf{m}, \mathbf{n} \in \ef_q^s}
  \overline{\hat{E}(\mathbf{m})} \hat{F}(\mathbf{m})
  \hat{E}(\mathbf{n}) \overline{\hat{F}(\mathbf{n})}
  T(\mathbf{m}, \mathbf{n}), 
\]
where
\begin{align*}
  T(\mathbf{m}, \mathbf{n}) & =
  c_q^s \overline{c}_q^s \sum_{j \in \ef_q}
  \sum_{k \in \ef_q^*} e\left(\frac{kj+|\mathbf{m}|^2 \bar{4} \bar{k}}{q}
  \right) \eta_q^s(k)
  \sum_{l \in \ef_q^*} e\left(\frac{-lj-|\mathbf{n}|^2 \bar{4} \bar{l}}{q}
  \right) \eta_q^s(l) \\
  & = q \sum_{k \in \ef_q^*}
  e \left( \frac{\bar{4}\bar{k}(|\mathbf{m}|^2-|\mathbf{n}|^2)}{q}
  \right) \\
  & = q \left( \sum_{k \in \ef_q}
  e \left( \frac{\bar{4}k(|\mathbf{m}|^2-|\mathbf{n}|^2)}{q}
  \right) - 1 \right) \\
  & = \left\{ \begin{array}{ll}
    q^2-q & \mbox{ if } \; |\mathbf{m}|^2=|\mathbf{n}|^2 \\
    -q & \mbox{ if } \; |\mathbf{m}|^2 \ne |\mathbf{n}|^2.
  \end{array} \right.
\end{align*}
Hence
\begin{equation}
\label{chertsey}
  \sum_{j \in \ef_q} \nu(j)^2 - \frac{(\#E)^2(\#F)^2}{q} \le U+|V|
\end{equation}
where
\[
  U= q^{3s} \sum_{\mathbf{m}, \mathbf{n} \in \ef_q^s:
  |\mathbf{m}|^2=|\mathbf{n}|^2}
  \overline{\hat{E}(\mathbf{m})} \hat{F}(\mathbf{m})
  \hat{E}(\mathbf{n}) \overline{\hat{F}(\mathbf{n})}
  = q^{3s} \sum_{r \in \ef_q} \left| \sigma_{E,F}(r)\right|^2
\]
and
\[
  V = q^{3s-1} \sum_{\mathbf{m}, \mathbf{n} \in \ef_q^s}
  \overline{\hat{E}(\mathbf{m})} \hat{F}(\mathbf{m})
  \hat{E}(\mathbf{n}) \overline{\hat{F}(\mathbf{n})}.
\]
By Cauchy-Schwarz' inequality,
\[
  |\sigma_{E,F}(r)|^2 \le
  \left( \sum_{\mathbf{m} \in \ef_q^s:|\mathbf{m}|^2=r} |\hat{E}(\mathbf{m})|^2
  \right)
  \left( 
  \sum_{\mathbf{m} \in \ef_q^s:|\mathbf{m}|^2=r} |\hat{F}(\mathbf{m})|^2
  \right)
   = \sigma_E(r) \sigma_F(r).
\]
Thus
\begin{equation}
\label{nhs}
  U \le q^{3s} \left|\sigma_{E,F}(0)\right|^2+
  q^{3s} \sum_{r \in \ef_q^*} \sigma_E(r) \sigma_F(r)
  \le q^{3s} \sum_{r \in \ef_q} \sigma_E(r) \sigma_F(r).
\end{equation}
Another application of Cauchy-Schwarz shows that
\begin{align*}
  \left| \sum_{\mathbf{m}, \mathbf{n} \in \ef_q^s}
  \overline{\hat{E}(\mathbf{m})} \hat{F}(\mathbf{m})
  \hat{E}(\mathbf{n}) \overline{\hat{F}(\mathbf{n})} \right|
  & \le \left(
  \sum_{\mathbf{m} \in \ef_q^s} \left| \hat{E}(\mathbf{m})\right|
  \left|\hat{F}(\mathbf{m})\right| \right)^2 \\
  & \le \sum_{\mathbf{m} \in \ef_q^s} \left| \hat{E}(\mathbf{m})
  \right|^2
  \sum_{\mathbf{m} \in \ef_q^s} \left| \hat{F}(\mathbf{m}) \right|^2.
\end{align*}
Hence, by Plancherel's formula,
\begin{equation}
\label{gatwick}
  |V| \le q^{s-1} (\#E) (\# F).
\end{equation}
The result now follows from (\ref{chertsey}), (\ref{nhs}) and
(\ref{gatwick}).
\end{proof}

\begin{lemma}
\label{montblanc}
Let $s \ge 2$ be even, $\# E \le \# F$ and $(\# E)(\# F) \ge 900q^s$.
Then we have
\[
  \left|\sigma_{E, F}(0)\right|^2 = q^{-3s} \nu(0)^2
  + O\left(q^{-3s-1}(\#E)^2(\# F)^2\right).
\]
\end{lemma}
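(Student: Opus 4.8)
The plan is to start from Lemma \ref{insel} specialised to $j=0$, which writes $\nu(0) = q^{2s}\sum_{\mathbf{m}}\hat{S}_0(\mathbf{m})\overline{\hat{E}(\mathbf{m})}\hat{F}(\mathbf{m})$, and to extract from this sum the part that is proportional to $\sigma_{E,F}(0)$. Here the hypothesis that $s$ is even is decisive: by Corollary \ref{zoff}, for even $s$ the value $\hat{S}_0(\mathbf{m})$ is the \emph{exact} constant $\gamma := c_q^s(q^{-s/2}-q^{-s/2-1})$ on the whole punctured isotropic cone $\{\mathbf{m}\ne\mathbf{0}:|\mathbf{m}|^2=0\}$, while off the cone it is only $O(q^{-s/2-1})$, and at the origin $|\hat{S}_0(\mathbf{0})|\le 2/q$. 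I would therefore split the sum over $\mathbf{m}$ into these three ranges.

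On the cone the constancy of $\hat{S}_0$ lets me pull out $\gamma$, and the remaining sum $\sum_{\mathbf{m}\ne\mathbf{0},\,|\mathbf{m}|^2=0}\overline{\hat{E}(\mathbf{m})}\hat{F}(\mathbf{m})$ equals $\sigma_{E,F}(0)-\overline{\hat{E}(\mathbf{0})}\hat{F}(\mathbf{0})=\sigma_{E,F}(0)-q^{-2s}(\#E)(\#F)$, since $|\mathbf{0}|^2=0$. This yields a clean identity $\nu(0)=w\,\sigma_{E,F}(0)+R$, where $w:=q^{2s}\gamma$ has $|w|=q^{3s/2}(1-q^{-1})$, and the remainder $R$ collects the origin term (of size $O(q^{-1}(\#E)(\#F))$ after multiplication by $q^{2s}$), the correction $-\gamma q^{-2s}(\#E)(\#F)$ (also $O(q^{-1}(\#E)(\#F))$, as $|\gamma|\le q^{-s/2}\le q^{-1}$ for $s\ge 2$), and the off-cone contribution, which Cauchy-Schwarz and Plancherel bound by $O(q^{s/2-1}(\#E)^{1/2}(\#F)^{1/2})$. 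Altogether $|R|\ll q^{-1}(\#E)(\#F)+q^{s/2-1}(\#E)^{1/2}(\#F)^{1/2}$.

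The final step is purely algebraic. Since $\nu(0)$ is a nonnegative integer with $\nu(0)\le(\#E)(\#F)$, dividing $\nu(0)=w\sigma_{E,F}(0)+R$ by $w$ and taking squared modulus gives $|\sigma_{E,F}(0)|^2=|w|^{-2}\nu(0)^2-2\mathrm{Re}\bigl(\sigma_{E,F}(0)\,\overline{R/w}\bigr)-|w|^{-2}|R|^2$, whose middle term is at most $2|\sigma_{E,F}(0)|\,|R|/|w|$ in modulus. I would replace $|w|^{-2}=q^{-3s}(1-q^{-1})^{-2}$ by $q^{-3s}(1+O(q^{-1}))$, so the leading term becomes $q^{-3s}\nu(0)^2$ plus an error $O(q^{-3s-1}\nu(0)^2)\subseteq O(q^{-3s-1}(\#E)^2(\#F)^2)$ by the trivial bound on $\nu(0)$. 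For the remaining two error terms I would use $|w|\gg q^{3s/2}$ together with the Cauchy-Schwarz estimate $|\sigma_{E,F}(0)|^2\le\sigma_E(0)\sigma_F(0)\le q^{-2s}(\#E)(\#F)$ and the bound on $|R|$ above.

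The main obstacle is essentially bookkeeping: I must check that each piece of the cross term and of $|w|^{-2}|R|^2$ is dominated by $q^{-3s-1}(\#E)^2(\#F)^2$. This is exactly where the hypothesis $(\#E)(\#F)\ge 900q^s$ enters — in the weak form $q^s\le(\#E)(\#F)$ (and a fortiori $q^{s-1}\le(\#E)(\#F)$) — since it converts the half-power factors $(\#E)^{1/2}(\#F)^{1/2}$ coming from the off-cone term and from $\sigma_{E,F}(0)$ into the full product $(\#E)(\#F)$ required by the target error. No use of $\#E\le\#F$ is needed, as every estimate above is symmetric in $E$ and $F$.
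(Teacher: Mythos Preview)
Your argument is correct and rests on the same key observation as the paper's proof: for even $s$, $\hat{S}_0(\mathbf{m})$ equals the exact constant $c_q^s(q^{-s/2}-q^{-s/2-1})$ on the punctured null cone and is $O(q^{-s/2-1})$ off it, which forces the asymptotic $\sigma_{E,F}(0)=q^{-3s/2}c_q^s\,\nu(0)+O\!\left(q^{-3s/2-1}(\#E)(\#F)\right)$. The paper obtains this relation by starting from $\sigma_{E,F}(0)$ and rewriting it as $q^{-s}\sum_{\mathbf{x},\mathbf{y}}E(\mathbf{x})F(\mathbf{y})\hat{S}_0(\mathbf{y}-\mathbf{x})$ (physical-space variables), then splitting according to $\mathbf{x}=\mathbf{y}$, $\mathbf{x}\ne\mathbf{y}$ with $|\mathbf{x}-\mathbf{y}|^2=0$, and $|\mathbf{x}-\mathbf{y}|^2\ne 0$; the middle range gives $\nu(0)+O(\#E)$ directly. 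You instead start from Lemma~\ref{insel} (Fourier-space variable $\mathbf{m}$) and split over $\mathbf{m}=\mathbf{0}$, the punctured cone, and the off-cone region, arriving at the inverse relation $\nu(0)=w\,\sigma_{E,F}(0)+R$ and then solving for $\sigma_{E,F}(0)$. The two routes are dual and of comparable length; the paper's is marginally cleaner at the squaring step (it just multiplies by $\overline{\sigma_{E,F}(0)}$ and uses $\nu(0)=O((\#E)(\#F))$), whereas you expand $|\sigma_{E,F}(0)|^2$ and check three error pieces separately --- but all your estimates are right, and your remark that $\#E\le\#F$ is never used is accurate (the paper does not use it either).
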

\begin{proof}
As in the proof of Lemma \ref{karte},
\begin{align*}
  \sigma_{E,F}(0) & = \sum_{\mathbf{m} \in \ef_q^s:
  |\mathbf{m}|^2=0} \overline{\hat{E}(\mathbf{m})} \hat{F}(\mathbf{m})
  = \sum_{\mathbf{m} \in \ef_q^s}
  \overline{\hat{E}(\mathbf{m})} \hat{F}(\mathbf{m}) S_0(\mathbf{m}) \\
  & = q^{-2s}
  \sum_{\mathbf{m} \in \ef_q^s} \sum_{\mathbf{x} \in \ef_q^s}
  E(\mathbf{x}) e\left(\frac{\mathbf{m}\mathbf{x}}{q}\right)
  \sum_{\mathbf{y} \in \ef_q^s}
  F(\mathbf{y}) e\left(\frac{-\mathbf{m}\mathbf{y}}{q}\right)
  S_0(\mathbf{m})\\
  & = q^{-2s} \sum_{\mathbf{x}, \mathbf{y} \in \ef_q^s}
  E(\mathbf{x}) F(\mathbf{y}) \sum_{\mathbf{m} \in \ef_q^s}
  e\left(\frac{\mathbf{m}(\mathbf{x}-\mathbf{y})}{q}\right)
  S_0(\mathbf{m})\\
  & = q^{-s} \sum_{\mathbf{x}, \mathbf{y} \in \ef_q^s}
  E(\mathbf{x}) F(\mathbf{y}) \hat{S}_0(\mathbf{y}-\mathbf{x}).
\end{align*}
By Corollary \ref{zoff} and Cauchy-Schwarz' inequality we obtain
\begin{align*}
  \sigma_{E,F}(0) & = q^{-s} c_q^s \sum_{\mathbf{x}, \mathbf{y} \in
  \ef_q^s : \mathbf{x} \ne \mathbf{y}, |\mathbf{x}-\mathbf{y}|^2=0}
  E(\mathbf{x}) F(\mathbf{y}) \left(q^{-s/2}-q^{-s/2-1}\right)\\
  & + O\left(q^{-s} \sum_{\mathbf{x} \in \ef_q^s} E(\mathbf{x})
  F(\mathbf{x}) q^{-1}\right)\\
  & + O\left( q^{-s} \sum_{\mathbf{x}, \mathbf{y} \in \ef_q^s:
  \mathbf{x} \ne \mathbf{y}, |\mathbf{x}-\mathbf{y}|^2 \ne 0}
  E(\mathbf{x}) F(\mathbf{y}) q^{-s/2-1}\right)\\
  & = q^{-\frac{3}{2}s} c_q^s
  \left( \nu(0) + O(\#E) \right) +
  O\left(q^{-s-1} \sum_{\mathbf{x} \in \ef_q^s}
  E(\mathbf{x}) F(\mathbf{x})\right) \\
  & + O \left( q^{-\frac{3}{2}s-1}
  \sum_{\mathbf{x}, \mathbf{y} \in \ef_q^s: \mathbf{x} \ne \mathbf{y}}
   E(\mathbf{x}) F(\mathbf{y}) \right)\\
  & = q^{-\frac{3}{2}s} c_q^s \nu(0) + O\left(q^{-\frac{3}{2}s} \# E\right)
  \\ & + O\left(q^{-s-1} \left( \sum_{\mathbf{x} \in \ef_q^s}
  E(\mathbf{x})^2 \right)^{1/2}
  \left( \sum_{\mathbf{x} \in \ef_q^s} F(\mathbf{x})^2 \right)^{1/2}
  \right)\\
  & + O\left(q^{-\frac{3}{2}s-1} \sum_{\mathbf{x} \in \ef_q^s}
  E(\mathbf{x}) \sum_{\mathbf{y} \in \ef_q^s} F(\mathbf{y})\right)\\
  & = q^{-\frac{3}{2}s} c_q^s \nu(0)
  + O\left(q^{-\frac{3}{2}s}\# E\right)
  + O\left(q^{-s-1} (\#E)^{1/2} (\#F)^{1/2}\right)\\
  & + O\left(q^{-\frac{3}{2}s-1}(\#E)(\#F)\right)\\
  & = q^{-\frac{3}{2}s} c_q^s \nu(0) + O\left(q^{-\frac{3}{2}s-1}
  (\# E)(\# F)\right).
\end{align*}
Multiplying with $\overline{\sigma_{E,F}(0)}$ and noting that
$\nu(0)=O\left((\# E)(\# F)\right)$ by Lemma \ref{fueller}
then yields the result.
\end{proof}

\begin{lemma}
\label{tourist}
Let $s \ge 2$, $\# E \le \# F$ and $(\#E)(\#F) \ge (\log q+900) q^s$.
Then
\begin{equation}
\label{autist}
  \sum_{r \in \ef_q^*} \nu(r)^2 
  \ll \frac{(\#E)^2(\#F)^2}{q} + (\log q)q^{\frac{s-1}{2}}(\#E)^2(\#F).
\end{equation}
For $s=2$, we also have the alternative bound
\[
  \sum_{r \in \ef_q^*} \nu(r)^2 \ll
  \frac{(\#E)^2(\#F)^2}{q} + (\log q) q (\# E)^{3/2} (\#F).
\]
\end{lemma}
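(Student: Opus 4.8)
The plan is to pass from the sum over $\ef_q^*$ to the full sum over $\ef_q$ through the identity $\sum_{r \in \ef_q^*} \nu(r)^2 = \sum_{j \in \ef_q} \nu(j)^2 - \nu(0)^2$, to bound the full sum by Lemma \ref{rollei}, and then to substitute the spherical-average estimates of Lemma \ref{tavanic}, absorbing every lower-order contribution into the two claimed main terms by means of the hypothesis $(\#E)(\#F) \ge (\log q + 900)q^s$. First I would invoke the first displayed inequality of Lemma \ref{rollei}, namely
\[
  \sum_{j \in \ef_q} \nu(j)^2 \le \frac{(\#E)^2(\#F)^2}{q} + q^{3s}\left|\sigma_{E,F}(0)\right|^2 + q^{3s}\sum_{r \in \ef_q^*}\sigma_E(r)\sigma_F(r) + q^{s-1}(\#E)(\#F),
\]
and then subtract $\nu(0)^2$. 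The decisive point is the joint treatment of the diagonal term $q^{3s}|\sigma_{E,F}(0)|^2$ and of $-\nu(0)^2$, and here the two parities of $s$ diverge.

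For odd $s$ the argument is the more direct one: I would simply discard the nonpositive $-\nu(0)^2$ and absorb the $r=0$ contribution into the spherical sum by using the bound (\ref{kiste}), which for odd $s$ is valid over all of $\ef_q$. Multiplying through by $q^{3s}$ turns (\ref{kiste}) into $(\log q)\left(q^{s-1}(\#E)(\#F) + q^{\frac{s-1}{2}}(\#E)^2(\#F)\right)$, using $q^{3s}q^{-2s-1} = q^{s-1}$ and $q^{3s}q^{-(5s+1)/2} = q^{(s-1)/2}$. For even $s$ the bound (\ref{kiste}) is not available, so the diagonal term $q^{3s}|\sigma_{E,F}(0)|^2$ has to be kept. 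Here I would appeal to Lemma \ref{montblanc}, valid precisely for even $s$, which after multiplication by $q^{3s}$ reads $q^{3s}|\sigma_{E,F}(0)|^2 = \nu(0)^2 + O\left(q^{-1}(\#E)^2(\#F)^2\right)$. Subtracting $\nu(0)^2$ cancels the leading part of the diagonal term exactly and leaves only the error $O(q^{-1}(\#E)^2(\#F)^2)$, which is of the same order as the first main term $\frac{(\#E)^2(\#F)^2}{q}$; the surviving spherical sum over $\ef_q^*$ is then controlled by (\ref{federolf}), producing the same two expressions as in the odd case.

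It remains to absorb the lower-order terms, and this is where the size hypothesis enters. The contributions $(\log q)q^{s-1}(\#E)(\#F)$ and the bare $q^{s-1}(\#E)(\#F)$ coming from $|V|$ are both $\ll \frac{(\#E)^2(\#F)^2}{q}$, since this is equivalent to $(\#E)(\#F) \gg (\log q) q^s$, which follows from $(\#E)(\#F) \ge (\log q + 900)q^s$; the remaining term $(\log q)q^{\frac{s-1}{2}}(\#E)^2(\#F)$ is exactly the second main term, so (\ref{autist}) follows. For the alternative estimate at $s=2$ I would run the even-$s$ computation verbatim but replace (\ref{federolf}) by the sharper bound (\ref{lidl}); multiplying by $q^{3s}=q^6$ gives $q^{6}\sum_{r \in \ef_q^*}\sigma_E(r)\sigma_F(r) \ll (\log q)q(\#E)^{3/2}(\#F)$, and the term $q(\#E)(\#F)$ from $|V|$ is again swallowed by $\frac{(\#E)^2(\#F)^2}{q}$ because $(\#E)(\#F) \gg q^2$.

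The main obstacle is the even-$s$ handling of the diagonal term. Without the cancellation furnished by Lemma \ref{montblanc} the bare quantity $q^{3s}|\sigma_{E,F}(0)|^2$ could be as large as $\nu(0)^2 \asymp (\#E)^2(\#F)^2$, which would overwhelm the target bound; it is precisely the asymptotic identity relating $\sigma_{E,F}(0)$ to $\nu(0)$ with an error saving a full power of $q$ that makes the cancellation against $-\nu(0)^2$ effective and leaves a negligible remainder.
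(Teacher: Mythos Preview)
Your proposal is correct and follows essentially the same approach as the paper: split into odd and even $s$, for odd $s$ use the full-range bound \eqref{kiste} and drop the nonpositive $-\nu(0)^2$, for even $s$ invoke Lemma~\ref{montblanc} so that $q^{3s}|\sigma_{E,F}(0)|^2$ cancels against $\nu(0)^2$ up to an $O(q^{-1}(\#E)^2(\#F)^2)$ error, then apply \eqref{federolf} (respectively \eqref{lidl} for the $s=2$ alternative) and absorb the $(\log q)q^{s-1}(\#E)(\#F)$ term via the size hypothesis. The only cosmetic difference is that the paper, for odd $s$, quotes the second displayed inequality of Lemma~\ref{rollei} directly rather than the first together with $|\sigma_{E,F}(0)|^2\le\sigma_E(0)\sigma_F(0)$, but this is the same step.
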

\begin{proof}
By Lemma \ref{rollei} and Lemma \ref{tavanic}, for odd $s \ge 2$ we obtain
\begin{align*}
  \sum_{r \in \ef_q} \nu(r)^2 & \ll
  \frac{(\#E)^2(\#F)^2}{q} + q^{s-1} (\#E)(\#F) \\
  & + (\log q)
  (q^{s-1} (\#E)(\#F)+q^{\frac{s-1}{2}} (\#E)^2 (\#F)).
\end{align*}
Note that
\[
  (\log q)q^{s-1} (\#E)(\#F) \ll \frac{(\#E)^2 (\#F)^2}{q}
\]
since $(\#E)(\#F) \gg (\log q)q^s$, whence
\[
  \sum_{r \in \ef_q} \nu(r)^2 \ll
  \frac{(\#E)^2(\#F)^2}{q} + (\log q) q^{\frac{s-1}{2}} (\#E)^2 (\#F).
\]
Since $\nu(0)^2 \ge 0$, this is even stronger than the claim
\eqref{autist}.
For even $s \ge 2$, Lemma \ref{rollei} and Lemma \ref{montblanc} yield
\begin{align*}
  \sum_{r \in \ef_q} \nu(r)^2 & \le
  \frac{(\#E)^2(\#F)^2}{q} + q^{s-1} (\#E)(\#F) \\
  & + \nu(0)^2 + O(q^{-1} (\#E)^2 (\#F)^2) +
  q^{3s} \sum_{r \in \ef_q^*} \sigma_E(r) \sigma_F(r).
\end{align*}
As above, subtracting $\nu(0)^2$ and applying Lemma \ref{tavanic}
then gives
\[
  \sum_{r \in \ef_q^*} \nu(r)^2 \ll
  \frac{(\#E)^2 (\#F)^2}{q} + (\log q) q^{\frac{s-1}{2}} (\#E)^2 (\#F).
\]
To obtain the alternative
bound for $s=2$, we just use the alternative bound in Lemma \ref{tavanic}
and keep the rest of the proof the same.
\end{proof}

\section{Proof of Theorem \ref{chaos}}
We follow the argument leading to formula (2.6) in \cite{IR}.
By definition (\ref{mechanic}) of $\nu(j)$, clearly
\[
  \sum_{j \in \ef_q} \nu(j) = (\#E)(\#F).
\]
Hence, by Lemma \ref{fueller},
\[
  \left( \sum_{j \in \ef_q} \nu(j) \right)^2 - 2\nu(0)^2
  \ge \frac{1}{50} (\#E)^2(\#F)^2.
\]
Moreover, by Cauchy-Schwarz,
\begin{align*}
  \left( \sum_{j \in \ef_q} \nu(j) \right)^2 & \le 2\nu(0)^2
  + 2 \left( \sum_{j \in \ef_q^*} \nu(j) \right)^2\\
  & \le 2 \nu(0)^2 + 2 \left( \sum_{j \in \ef_q^*} \nu(j)^2 \right)
  \cdot \left( \sum_{j \in \ef_q^*:\nu(j)>0} 1 \right)\\
  & \le 2\nu(0)^2 + 2\#\Delta(E, F) \cdot \sum_{j \in \ef_q^*} \nu(j)^2.
\end{align*}
Thus
\[
  \#\Delta(E, F) \gg \frac{(\#E)^2(\#F)^2}{\sum_{j \in \ef_q^*} \nu(j)^2}.
\]
The conclusion now follows immediately from Lemma \ref{tourist}.

\end{document}